\DeclareMathOperator{\sign}{sign}
\newcommand{\lemref}{Lemma~\ref}
\newcommand{\figref}{Figure~\ref}
\newtheorem{remark}{Remark}
\newtheorem{prop}{Proposition}
\newtheorem{lemma}{Lemma}
\newtheorem{definition}{Definition}
\numberwithin{remark}{section}
\numberwithin{prop}{section}
\numberwithin{thm}{section}
\numberwithin{lemma}{section}
\numberwithin{definition}{section}
\numberwithin{equation}{section}
\title{On non-uniqueness in mean field games}
\author[]{Erhan Bayraktar} \thanks{This research was supported in part by the National Science Foundation under grants DMS-1613170.}  
\address{Department of Mathematics, University of Michigan}
\email{erhan@umich.edu}
\author[]{Xin Zhang} 
\address{Department of Mathematics, University of Michigan}
\email{zxmars@umich.edu}
\keywords{Mean field game, Entropy solution, master equation, Nash equilibrium, Non-uniqueness}
\subjclass[2010]{60F99, 60J27, 60K35, 93E20}
\date{\today}
\begin{document}
\maketitle

\begin{abstract}
We analyze an $N+1$-player game and the corresponding mean field game with state space $\{0,1\}$. The transition rate of $j$-th player is the sum of his control $\alpha^j$ plus a minimum jumping rate $\eta$. Instead of working under monotonicity conditions, here we consider an anti-monotone running cost. We show that the mean field game equation may have multiple solutions if $\eta < \frac{1}{2}$. We also prove that that although multiple solutions exist, only the one coming from the entropy solution is charged (when $\eta=0$), and therefore resolve a conjecture of \cite{2019arXiv190305788H}. 
\end{abstract}

\section{Introduction}
The theory of mean field games (MFGs) was introduced recently (2006-2007) independently by Lasry, Lions (see  \cite{MR2269875}, \cite{LASRY2006679}, \cite{MR2295621}) and Caines, Huang, Malham\'{e} (see  \cite{4303232}, \cite{MR2346927}). It is an analysis of 
limit models for symmetric weakly interacting $N+1$-player differential games (see e.g. \cite{MR3752669}, \cite{MR3753660}). The solution of MFGs provides an approximated Nash Equilibrium. It also under some conditions follows that MFGs are limit points of $N+1$-player Nash equilibria. 

The influential work \cite{MR3967062} by Cardaliaguet, Delarue, Lasry, and Lions established the convergence of closed loop equilibria using the the so-called master equation, which is a partial differential equation with terminal conditions whose variable are time, state and measure. It is known that under the monotonicity condition, the master equation possess a unique solution, which is used to show the above convergence. A similar analysis was carried in finite state mean field games by Bayraktar and  Cohen \cite{MR3860894} and  Cecchin and Pelino \cite{MR4013871} independently obtain the above convergence result (as well as the the analysis of its fluctuations).

In this paper, we consider a case when the monotonicity assumption is not satisfied and resolve a conjecture of \cite{2019arXiv190305788H}, in which a two-state mean field game with Markov feedback strategies is analyzed. In this game the transition rate of each player is the sum of his control and a background jump rate $\eta \geq 0$. Supposing an anti-monotone running cost (follow the crowd game), \cite{2019arXiv190305788H} poses a conjecture on the nature of the limits of $N+1$-player Nash equilibrium. We proceed by using similar techniques to \cite{MR3981375}, which considers an anti-monotone terminal condition. In particular, we again rely on the entropy solution of the master equation to prove the convergence and show that the limit of $N+1$-player Nash equilibrium selects the unique mean field equilibrium induced by this entropy solution. 
In \cite{MR3981375}, they showed that the mean field game equation has at most three equations, while in our model if $\eta<\frac{1}{2}$, the number of solutions is increasing with time horizon and can be arbitrarily large. Also, the entropy solution in our case cannot be written down explicitly, and so we need to construct using the characteristics and check that it is entropic. For numerical methods towards the convergence of $N+1$ player games to entropy solution, we refer readers to the work of Gomes et al. \cite{MR3268060}. Let us mention the recent work by \cite{MR4046528}, where they study linear-quadratic mean field games in the diffusion setting. To re-establish the uniqueness of MFG solutions, they add a common noise and prove that the limit of MFG solutions as noise tends to zero is just the solution induced by the entropy solution of the master equation without common noise. 

The paper is organized as follows. In Section~\ref{sec:model}, we introduce the $N+1$-player game we are considering, and introduce the equations characterizing the mean field equilibria. In Section~\ref{sec:non-uni}, we show that the forward backward equation characterizing the mean field game possesses a unique solution if $\eta \geq \frac{1}{2}$, may have multiple solutions if $\eta < \frac{1}{2}$. Furthermore, we also determine the number of solutions.  In Section~\ref{eq:ME}, we explicitly find the entropy solution of the master equation. In Section~\ref{sec:conv}, we show that if $\eta =0$ each player in the $N+1$-player game will follow the majority and briefly present that the optimal trajectories of $N+1$-player game converges to the optimal trajectory induced by the entropy solution of the master equation.

\section{Two states mean field games}\label{sec:model}
We consider the $N+1$-players game with state space $\Sigma=\{0,1\}$, and denote the state of players by $\mathbf{Z}(t):=(Z_j(t))_{j=1}^{N+1}$, which evolves as controlled Markov processes. The jump rate of $Z_j(t)$ is given by $\alpha^j(t,\mathbf{Z}(t))+ \eta$, where $\alpha^j:[0,T] \times \Sigma^{N+1} \to [0,+\infty)$ is the control of player $j$ and $\eta \geq 0$ is the minimum jump rate, i.e., 
$$\mathbb{P}[Z_j(t+h)=1-i| Z_j(t)=i]=(\alpha^j(t, \mathbf{Z}(t)) + \eta ) h + o(h) .$$
Denote by $\mathcal{A}$ the collection of all the measurable and locally integrable functions $[0,T] \times \Sigma^{N+1} \to [0, +\infty)$, and by $\boldsymbol{\alpha}^{N+1}=(\alpha^1, \dotso, \alpha^{N+1})  \in \mathcal{A}^{N+1}$ the control of all players. It is can be easily seen that the law of Markov process is determined by the control vector $\boldsymbol{\alpha}^{N+1} $.

Let the empirical measure of player $j$ at time $t$ to be $$\theta^{N+1,j}(t)= \frac{1}{N} \sum\limits_{k=1, k \not = j}^{N+1} \delta_{Z_k(t)=0}.$$
Then given the running cost function 
\begin{equation}
f(i,\theta)=|1-\theta-i|=
\begin{cases}
1- \theta       & i=0  \\
\theta      & i=1 , \\
\end{cases}
\end{equation}
the control vector $\boldsymbol{\alpha}^{N+1} \in \mathcal{A}^{N+1}$ and it is associated Markov process $(\mathbf{Z}(t))_{0 \leq t \leq T}$, the objective function of the $k$-th player is defined by 
$$J^{N+1}_k(\boldsymbol{\alpha}^{N+1})= \mathbb{E}\bigg[\int_0^T f(Z_k(t),\theta^{N+1,k}(t))+ \frac{ \alpha^k(t, \mathbf{Z}(t))}{2} dt        \bigg]$$
For a control vector  $\boldsymbol{\alpha}^{N+1} \in \mathcal{A}^{N+1}$ and $\beta \in \mathcal{A}$, define the perturbed control vector by 
\begin{equation*}
[\boldsymbol{\alpha}^{N+1, -j}; \beta]_k:=
\begin{cases}
\alpha_k,   & k \not = j  \\
\beta, & k=j.   \\
\end{cases}
\end{equation*}

\begin{definition}
A control vector $\boldsymbol{\alpha}^{N+1} \in \mathcal{A}^{N+1}$ is a Nash Equilibrium if for any $k=1, \dotso, N+1$ 
$$J_k^{N+1}(\boldsymbol{\alpha}^{N+1})= \inf\limits_{\beta \in \mathcal{A}} J_k^{N+1}([\boldsymbol{\alpha}^{N+1, -}; \beta]      ).$$
\end{definition}

To find the Nash equilibrium, it is standard to solve its corresponding Hamilton-Jacobi equations for value functions $V^{N+1}(t,i,\theta), i=0,1$   (see e.g. \cite{MR3072242}).
\begin{equation}\label{HJB1}
\begin{cases}
 -\frac{d}{dt} V^{N+1}(t,i,\theta) = f(i,\theta)- \frac{(\alpha^{N+1}_*(t,i,\theta))^2 }{2}\\
 \ \ \ \ \ \ \ \ \ \ \ \ \ \ \ \ + \eta (V^{N+1}(t,1-i,\theta)-V^{N+1}(t,i, \theta))   \\ 
 \ \ \ \ \ \ \ \ \ \ \ \ \ \ \ \ +N(1-\theta)\bigg(\alpha^{N+1}_*(t,1,\theta+\frac{1-i}{N}) +\eta \bigg)(V^{N+1}(t,1, \theta +\frac{1}{N})-V^{N+1}(t,1,\theta))\\
 \ \ \ \ \ \ \ \ \ \ \ \ \ \ \ \ +N\theta \bigg( \alpha^{N+1}_*(t,0,\theta-\frac{i}{N}) +\eta \bigg)(V^{N+1}(t,1,\theta-\frac{1}{N})-V^{N+1}(t,1,\theta)),   \tag{HJB} \\ 
 V^{N+1}(T,i,\theta)= 0, \\
\end{cases}
\end{equation}
where the optimal control  is given by
$$a^{N+1}_*(t, i, \theta)=(V^{N+1}(t,i,\theta)-V^{N+1}(t, 1-i, \theta))_+.$$

It is also easy to write down the corresponding mean field game equation,
\begin{equation}\label{MFG}
\begin{cases}
\frac{d}{dt} \theta(t)=(1-\theta(t))( (u(t,1)-u(t,0)   )_+ + \eta      )  -\theta(t)( (u(t,0)-u(t,1)   )_+ + \eta      ),   \tag{MFG} \\
 -\frac{d}{dt} u(t,i)=f(i,\theta)-\eta(u(t,i)-u(t,1-i)        )-\frac{ ( (u(t,i)-u(t,1-i)   )_+)^2         }{2},  \\
 \theta(0)=\bar{\theta},  \\
 u(T,i)=0,    \\
\end{cases}
\end{equation}
and  see e.g. \cite{MR3072242} and the corresponding master equation, 
 the corresponding master equation, 
\begin{equation}\label{ME}
\begin{cases}
-\frac{\partial }{\partial t}U(t,i,\theta)= f(i,\theta)-\frac{[(U(t,i,\theta)-U(t,1-i, \theta)_+]^2}{2}+\eta (U(t,1-i, \theta)-U(t,i,\theta)) \\
\ \ \ \ \ \ \ \ \ \ \ \ \ \ \ \ \ +\frac{\partial }{\partial \theta}U(t,i,\theta)((U(t,1,\theta)-U(t,0,\theta)_+ +\eta)(1-\theta) \\
\ \ \ \ \ \ \ \ \ \ \ \ \ \ \ \ \ -\frac{\partial }{\partial \theta}U (t,i,\theta)((U(t,0,\theta)-U(t,1,\theta)_+ + \eta)\theta, \tag{ME} \\
U(T,i,\theta)=0,
\end{cases}
\end{equation}
see Bayraktar, Cohen \cite{MR3860894} and  Cecchin, Pelino \cite{MR4013871}.
Recall from the latter two references that the uniqueness of \eqref{MFG} and \eqref{ME} is guaranteed by the so-called monotonicity condition, i.e., for every $\theta, \theta^{'} \in [0,1]$, 
$$\sum\limits_{i=0,1} (-1)^i (f(i,\theta)-f(i,\theta^{'}))(\theta-\theta^{'}) \geq 0,$$
which does not hold true with our choice of running cost.

\section{non-uniqueness}\label{sec:non-uni}
We show that the mean field equations \eqref{MFG} may have multiple solutions. Taking $$y(t)=u(t,1)-u(t,0), \quad x(t)=2\theta(t)-1,$$
then $\eqref{MFG}$ becomes

\begin{equation}\label{eq1}
\begin{cases}
 \frac{d}{dt} x =y-x|y|-2 \eta x   \\
 -\frac{d}{dt}  y =x-\frac{1}{2}y|y|-2 \eta y    \\
 y(T)=0, x(0)=2\bar{\theta}-1.
\end{cases}
\end{equation}
The second one of  \eqref{eq1} is equivalent to 
\begin{equation}\label{eq2}
\begin{aligned}
x=\frac{1}{2}y|y|+2 \eta y-\frac{d}{dt}  y.
\end{aligned}
\end{equation}
Taking derivative with respect to $t$ in \eqref{eq2} and in conjunction with \eqref{eq1}, we obtain 
\begin{equation}\label{eq3}
\begin{aligned}
\frac{d^2}{dt^2}  y + y-\frac{1}{2}y^3-3 \eta |y|y-4{\eta}^2y=0.
\end{aligned}
\end{equation}

For simplicity, we time reverse the system and try to solve
\begin{equation}\label{eq4}
\begin{cases}
\frac{d^2}{dt^2}y + y-\frac{1}{2}y^3-3 \eta |y|y-4{\eta}^2y=0 \\
\frac{1}{2}y(T)|y(T)|+2 \eta y(T)+\frac{d}{dt} y(T)=x(T)=2\bar{\theta}-1\\
y(0) =0. \\
\end{cases}
\end{equation}
Since \eqref{eq4} contains only the $y$ variable, it can be uniquely solved if imposing the initial conditions $y(0)=0, \frac{d}{dt} y(0)=v$, and we denote its $\mathcal{C}^1$ solution as $y_v(.)$. Therefore the number of solutions to \eqref{eq4} is just the number of initial velocity $v$ such that $2\bar{\theta}-1=x_v(T)$, where for any $t \geq 0$
\begin{equation}\label{x_v} 
x_v(t):=\frac{1}{2}y_v(t)|y_v(t)|+2 \eta y_v(T)+\frac{d}{dt} y_v(t) 
\end{equation}

We rewrite the differential equation as a derivative with respect to $y$ instead of $t$, i.e., 
$$\frac{d^2 y}{dt^2}=\frac{d}{dt}\bigg(\frac{1}{2}(\frac{dy}{dt})^2\bigg)\frac{dt}{dy}=\frac{d}{dy}\bigg(\frac{1}{2}(\frac{dt}{dy})^{-2}\bigg).$$
We can therefore get an implicit solution 
\begin{equation}\label{solution}
\frac{dt}{dy}=\pm \frac{1}{\sqrt{G(y)+v^2}},
\end{equation}
where $G(y)=\frac{1}{4}y^4+2\eta|y|^3+4 {\eta}^2y^2-y^2.$ 

When $y \geq 0$, the first order derivative of $G$ is 
$$G^{'}(y)=y^3+6\eta y^2+8{\eta}^2y-2y=y(y+3\eta-\sqrt{{\eta}^2+2})(y+3 \eta + \sqrt{\eta^2+2}).$$
It is then easy to conclude the following results 
\begin{itemize}
\item If $\eta \geq \frac{1}{2}$, the function $G(y)$ is strictly increasing for $y \geq 0$;
\item If $0 \leq \eta < \frac{1}{2}$, the function $G(y)$ decreases on the interval $[0, \sqrt{{\eta}^2+2}-3\eta]$ and increases on the interval $ [\sqrt{{\eta}^2+2}-3\eta, +\infty)$;
\item If $\eta < \frac{1}{2}, |v| < v_0$, the function $G(y)+v^2$ maybe negative for some $y \in \mathbb{R}$. Let us denote by $y(v)$ the smallest positive root of $G(y)+v^2=0$. Since the function $y \mapsto G(y)$ first decreases to $-v_0^2$ over the interval $[0, \sqrt{{\eta}^2+2}-3\eta]$, and then increasing to $+\infty$ over the interval $[\sqrt{{\eta}^2+2}-3\eta, +\infty)$, we know that the function $y \mapsto G(y)+v^2$ decreases over $[0, y(v))$ and crosses $0$ at $y(v)$, which implies that $y(v)$ is a simple root.
\end{itemize}
Let $v_0:=\sqrt{-G(\sqrt{{\eta}^2+2}-3\eta)}$ if $\eta < \frac{1}{2}$.
and 
\begin{equation}\label{eq:defT}
T(v):=\int_0^{y(v)} \frac{dz}{\sqrt{G(z)+v^2}}, \quad v \in (0, v_0),
\end{equation}
whose role will be clear in the next result.


\begin{lemma}\label{y_v}
The following properties hold for solutions $y_v(.)$,
\begin{itemize}
\item  $y_v(.)$ is strictly increasing if $v >0$, strictly decreasing if $v<0$, identically $0$ if $v=0$;
\item If either $\eta \geq \frac{1}{2}, v \in \mathbb{R}$ or $\eta < \frac{1}{2}, |v| \geq v_0$, then the solution $y_v(t)< +\infty$ if and only if $ t<\int_0^{+\infty} \frac{dz}{\sqrt{G(z)+v^2}}$.  Furthermore, $y_v(.)$ is strictly increasing if $v >0$, strictly decreasing if $v<0$;
\item If $\eta < \frac{1}{2}, |v| \in (0, v_0)$, the solution $y_v(.)$ is a periodic function.
\end{itemize}
\end{lemma}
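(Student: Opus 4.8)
The plan is to reduce all three statements to the first integral behind \eqref{solution}. Multiplying the ODE in \eqref{eq4} by $\frac{d}{dt}y$ and integrating from $0$, using $y_v(0)=0$ and $\frac{d}{dt}y_v(0)=v$, produces the energy identity $\big(\frac{d}{dt}y_v\big)^2=G(y_v)+v^2$ on the maximal interval of existence; differentiating this (or comparing the ODE with the expression for $G'$ already computed in the excerpt) also records the useful relation $2\frac{d^2}{dt^2}y_v=G'(y_v)$. The case $v=0$ follows at once: $y\equiv 0$ solves \eqref{eq4}, the vector field $y\mapsto -y+\frac12 y^3+3\eta|y|y+4\eta^2 y$ is $C^1$ hence locally Lipschitz, and uniqueness gives $y_0\equiv 0$.

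For the monotonicity claims I would argue through the sign of the velocity. On any interval where $G(y_v)+v^2>0$ the energy identity forbids $\frac{d}{dt}y_v=0$, so by continuity $\frac{d}{dt}y_v$ keeps the sign of $v$ and $y_v$ is strictly monotone there; since $G$ is even, the equation in \eqref{eq4} is invariant under $y\mapsto -y$, whence $y_{-v}=-y_v$ and it suffices to treat $v>0$. In the regimes of the second bullet one checks $G(z)+v^2>0$ for all $z\ge 0$ — for $\eta\ge\frac12$ because $G\ge 0$ on $[0,\infty)$ and $v\neq 0$, and for $\eta<\frac12,\ |v|>v_0$ because $\min_{z\ge 0}G=-v_0^2<v^2$ — so the velocity never changes sign and $y_v$ is globally strictly increasing. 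Inverting $\frac{dt}{dy}=(G(y)+v^2)^{-1/2}$ from \eqref{solution}, the time to reach level $Y$ equals $\int_0^Y(G+v^2)^{-1/2}\,dz$, which increases to $\int_0^{\infty}(G+v^2)^{-1/2}\,dz$; since $G(z)\sim\frac14 z^4$ at infinity this limit is finite, so $y_v$ exists and is finite precisely up to that time and diverges there, which is exactly the stated equivalence. In the borderline case $|v|=v_0$ the integrand behaves like $|z-y^{*}|^{-1}$ near the double root $y^{*}=\sqrt{\eta^2+2}-3\eta$, the integral diverges, and $y_v$ increases to $y^{*}$ in infinite time — still consistent with the equivalence.

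For periodicity, fix $0<v<v_0$ (the case $v<0$ following from $y_{-v}=-y_v$). Here $G(z)+v^2$ has a smallest positive root $y(v)\in(0,y^{*})$, simple since $G'(y(v))<0$, and positive on $[0,y(v))$. The solution increases from $0$ and reaches $y(v)$ in the finite time $T(v)$ of \eqref{eq:defT}, the improper integral converging because near the simple root $G(z)+v^2\approx |G'(y(v))|\,(y(v)-z)$, an integrable $(y(v)-z)^{-1/2}$ singularity. At $t=T(v)$ we have $\frac{d}{dt}y_v=0$ and $\frac{d^2}{dt^2}y_v=\frac12 G'(y(v))<0$, so the trajectory turns around. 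I then close the orbit by the two symmetries of \eqref{eq4}: time-reversal about $T(v)$ (the equation has no first-order term, so $t\mapsto 2T(v)-t$ sends solutions to solutions and matches the data $(y(v),0)$) gives $y_v(2T(v)-t)=y_v(t)$, so that the state at $2T(v)$ is $(0,-v)$; since $y\mapsto -y$ also preserves the equation, uniqueness yields $y_v(2T(v)+s)=-y_v(s)$, and hence $y_v(4T(v)+s)=y_v(s)$, i.e.\ $y_v$ is periodic with period $4T(v)$. Equivalently, for $\eta<\frac12$ the origin is a center and the level set $\{(\frac{d}{dt}y)^2=G(y)+v^2,\ |y|\le y(v)\}$ is a closed orbit free of equilibria.

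The step I expect to demand the most care is the bookkeeping of the improper integrals, where the qualitative dichotomy of the lemma lives: a simple turning point $y(v)$ gives a convergent $(y(v)-z)^{-1/2}$ singularity and hence a finite quarter-period $T(v)$ (periodic motion), whereas the double root at $|v|=v_0$ gives a divergent $|z-y^{*}|^{-1}$ singularity and infinite travel time (asymptotic approach to the equilibrium $y^{*}$), while the quartic growth $G\sim\frac14 z^4$ at infinity gives a convergent tail and thus finite-time blow-up. A secondary point to verify is that gluing the four quarter-arcs in the periodic case yields a genuine $C^2$ solution; this is automatic because consecutive pieces agree to first order at the junctions and the equation then forces the second derivatives to agree as well.
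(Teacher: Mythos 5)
Your proposal is correct and follows essentially the same route as the paper: the energy identity $(\frac{d}{dt}y_v)^2=G(y_v)+v^2$ is exactly the first integral behind \eqref{solution}, the inversion $t=\int_0^{y}(G+v^2)^{-1/2}dz$ is how the paper constructs the solution, and your symmetry/gluing argument for the period-$4T(v)$ orbit is just a cleaner justification of the explicit four-piece periodic extension the paper writes down and calls "easily seen." The only added value is that you make explicit the integrability bookkeeping (simple root vs.\ double root vs.\ quartic tail), which the paper leaves implicit.
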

\begin{proof}
The first statement is clear. We prove the rest by writing down the unique $\mathcal{C}^1$ solution explicitly.

If either $\eta \geq \frac{1}{2}, v \in \mathbb{R}$ or $\eta < \frac{1}{2}, |v| \geq v_0$, then $G(z)+v^2  \geq 0$ for any $z \in \mathbb{R}$ and thus we obtain from \eqref{solution} that
\begin{equation*}
t= \sign(v) \int_0^y \frac{dz}{\sqrt{G(z)+v^2} }.
\end{equation*}
Since the function $y \mapsto \int_0^y \frac{dz}{\sqrt{G(z)+v^2} }$ is strictly increasing, for any $t <  \int_0^{+\infty} \frac{dz}{\sqrt{G(z)+v^2} }$, we can find a unique $y_v(t)$ such that $$t=\int_0^{y_v(t)}  \frac{dz}{\sqrt{G(z)+v^2} }.$$
It can be seen that the function $t \mapsto y_v(t)$ is $\mathcal{C}^1$, and therefore is the unique solution to \eqref{eq4}.

 Since $G(y_v(t))+v^2$ is always nonnegative, the solution $y_v(t)$ must oscillate between $[-y(v), y(v)]$. For any $0 \leq t \leq T(v)$, there exists a unique $y_v(t)$ such that $$t=\int_0^{y_v(t)}  \frac{dz}{\sqrt{G(z)+v^2} }.$$ Define a periodic function, still denoted by $y_v(.)$,
\[
y_v(t)=
\begin{cases}
 y_v(t-4kT(v)) & \ t \in [4kT(v), (4k+1)T(v)), \\
y_v((4k+2)T(v)-t) & \  t \in [(4k+1)T(v), (4k+2)T(v)), \\
-y_v(t-(4k+2)T(v)) & \ t \in [(4k+2)T(v), (4k+3)T(v)), \\
-y_v((4k+4)T(v)-t) & \ t \in [(4k+3)T(v), (4k+4)T(v)).  \\
\end{cases}
\]
It can be easily seen that $y_v(t)$ is the unique $\mathcal{C}^1$ solution to \eqref{eq4}.
\end{proof}

\begin{prop}
If $\eta \geq \frac{1}{2}$, then $x_v(T)$ is strictly increasing with respect to $v$ and therefore \eqref{eq4} has unique solution. 
\end{prop}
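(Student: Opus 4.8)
The plan is to reduce everything to an explicit formula for $x_v(T)$ in terms of $Y:=y_v(T)$ and $v$, and then read off monotonicity from the sign conditions already recorded for $G$. First I would exploit the odd symmetry of \eqref{eq4}: since the equation is invariant under $y \mapsto -y$, one has $y_{-v}(\cdot)=-y_v(\cdot)$, hence $x_{-v}(T)=-x_v(T)$ and $x_0(T)=0$. It therefore suffices to prove strict monotonicity on $v>0$; the odd symmetry then propagates it across $v=0$ to all of $\mathbb{R}$. For $v>0$ and $\eta \ge \tfrac12$, \lemref{y_v} gives that $y_v$ is strictly increasing and (for $t>0$) strictly positive, so $|y_v(t)|=y_v(t)$; moreover the first integral $\dot y_v^{\,2}=G(y_v)+v^2$ underlying \eqref{solution} forces $\dot y_v(T)=+\sqrt{G(Y)+v^2}$ on the increasing branch. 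Substituting into \eqref{x_v},
\[
x_v(T)=\tfrac12 Y^2+2\eta Y+\sqrt{G(Y)+v^2},\qquad Y=y_v(T)>0,
\]
so the whole problem becomes understanding how $Y$ and the surd depend on $v$.

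Next I would establish that $Y=y_v(T)$ is strictly increasing in $v$. From the implicit solution, $T=\int_0^{Y}\frac{dz}{\sqrt{G(z)+v^2}}$; writing $t_v(y):=\int_0^{y}\frac{dz}{\sqrt{G(z)+v^2}}$, this says $t_v(Y)=T$. For $0<v_1<v_2$ the integrands satisfy $\frac{1}{\sqrt{G(z)+v_1^2}}>\frac{1}{\sqrt{G(z)+v_2^2}}$ pointwise, so $t_{v_1}(y)>t_{v_2}(y)$ for every $y>0$, while each $t_{v_i}$ is strictly increasing in $y$. Inverting the relation $t_{v_i}(Y_i)=T$ then yields $Y_1<Y_2$, i.e. $y_{v_1}(T)<y_{v_2}(T)$. (Here I restrict to the $v$-interval on which $y_v(T)$ is finite, namely $T<\int_0^{+\infty}\frac{dz}{\sqrt{G(z)+v^2}}$ from \lemref{y_v}; this set is an interval because the right-hand side is decreasing in $v$, and both $Y_1,Y_2$ are finite there.)

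Finally I would assemble the pieces. Because $\eta\ge\tfrac12$, the itemized analysis of $G$ shows $G$ is strictly increasing on $[0,+\infty)$; hence $v\mapsto G(Y)$ and $v\mapsto v^2$ are both increasing, so $\sqrt{G(Y)+v^2}$ is strictly increasing in $v$. Likewise $Y\mapsto \tfrac12 Y^2+2\eta Y$ has derivative $Y+2\eta>0$, so it is strictly increasing in $Y$ and thus in $v$. A sum of strictly increasing functions of $v$ is strictly increasing, giving the claim for $v>0$; oddness extends it to all $v$. Strict monotonicity makes $v\mapsto x_v(T)$ injective, so the equation $x_v(T)=2\bar\theta-1$ has at most one root, and \eqref{eq4} (equivalently \eqref{MFG}) has a unique solution.

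The main obstacle is the middle step: controlling the dependence of the first-passage time $t_v(Y)$ on $v$ while simultaneously keeping track of whether the solution even survives to time $T$ (it blows up to $+\infty$ in finite time for $v>0$). The comparison-of-integrals argument handles both at once and avoids differentiating under the integral sign, but it hinges on $G'\ge 0$ on $[0,+\infty)$, which is exactly the $\eta\ge\tfrac12$ regime; for $\eta<\tfrac12$ the non-monotonicity of $G$ is precisely what allows $Y$—and hence $x_v(T)$—to fail to be monotone, producing the multiple solutions discussed later.
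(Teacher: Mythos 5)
Your argument is essentially the paper's own proof: the same odd-symmetry reduction to $v>0$, the same comparison of the first-passage integrals $\int_0^{y}\frac{dz}{\sqrt{G(z)+v^2}}$ to get $y_{v_1}(T)<y_{v_2}(T)$, and the same use of the monotonicity of $G$ on $[0,+\infty)$ (the only place $\eta\ge\tfrac12$ enters) to make every term of $x_v(T)=\tfrac12 Y^2+2\eta Y+\sqrt{G(Y)+v^2}$ increase with $v$. The one piece you gloss over is existence: strict monotonicity only gives \emph{at most} one root of $x_v(T)=2\bar\theta-1$, whereas the proposition asserts a unique solution. The paper closes this by observing that $v\mapsto\int_0^{+\infty}\frac{dz}{\sqrt{G(z)+v^2}}$ decreases from $+\infty$ to $0$, so there is a unique blow-up threshold $u$ with $\int_0^{+\infty}\frac{dz}{\sqrt{G(z)+u^2}}=T$, and $x_v(T)\to+\infty$ as $v\uparrow u$; combined with $x_0(T)=0$, continuity of $v\mapsto x_v(T)$ on $[0,u)$, and oddness, this shows the range is all of $\mathbb{R}$, so a root exists for every $2\bar\theta-1$. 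You should add this surjectivity step (you already have all the ingredients, since you restrict to the interval where $y_v(T)$ is finite); otherwise the proof is correct and matches the paper's route.
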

\begin{proof}
It can be seen that both of the equation \eqref{eq4} and the function $v \mapsto x_v(T)$ are odd. Therefore $y_{-v}(.)=-y_v(.)$, $x_{-v}(T)=-x_v(T)$, and we only need to prove the proposition for $v \geq 0$.  

The strictly decreasing function $v \mapsto \int_0^{+\infty} \frac{dz}{\sqrt{G(z)+v^2}} $ approaches $+\infty$ as $v \to 0$, approaches $0$ as $v \to +\infty$.
Therefore any positive $T$ there exists a unique $u >0$ such that $$
\int_0^{+\infty} \frac{dz}{\sqrt{G(z)+u^2}}=T.$$ As a result of Lemma~\ref{y_v}, the solution $y_v(.)$ is finite at $T$ if and only if $v<u$, and there exists a unique $y_v(T) >0$ such that $$T=\int_0^{y_v(T)} \frac{dz}{\sqrt{G(z)+v^2}},$$
and also $\frac{dy_v}{dt}|_T=\sqrt{G(y_v(T))+v^2}.$
Suppose $0 \leq v_1 < v_2<u$. Due to the fact that $G(z)+v_1^2 < G(z)+v_2^2, \forall z \in \mathbb{R}$, we obtain $$y_{v_1}(T)<y_{v_2}(T), \frac{d}{dt} y_{v_1}(T)< \frac{d}{dt} y_{v_2}(T),$$
from which we can conclude $x_{v_1}(T)<x_{v_2}(T)$. As a result of $\lim\limits_{v \to u} y_v(T)=+\infty$,  we obtain $\lim\limits_{v \to u} x_v(T)=+\infty$, and thus there exists a unique solution to \eqref{eq4} for any $2\bar{\theta}-1 \in \mathbb{R}$.

\end{proof}

As a result of the above proposition, the mean field equation \eqref{eq1} may have multiple solutions only if $\eta< \frac{1}{2}$. To find the number of solutions, we study the period of $y_v(.)$ in the following lemma. Note that since $y_{-v}(t)=-y_v(t)$ and $y_0(t)=0$, it suffices for us to consider the period of $y_v(.)$ for $v \in (0, v_0)$.

\begin{lemma}\label{monotone}
Suppose $0 \leq \eta <\frac{1}{2}$, $v \in (0, v_0)$, and $y(v)$ is the smallest postive root of $z \mapsto G(z)+v^2$. Recall \eqref{eq:defT} and define
\[
H(v):= \int_v^{y(v)} \frac{dz}{\sqrt{G(z)+v^2}}.\]
 Take $T(v)=T(-v), H(v)=H(-v)$ if $v \in (-v_0, 0)$. 
Then both $T(.)$ and $H(.)$ are increasing with respect to $v$ over the interval $(0,v_0)$, and $\lim\limits_{v \to v_0}T(v)=+\infty .$
\end{lemma}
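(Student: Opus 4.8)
The plan is to read both $T(\cdot)$ and $H(\cdot)$ as partial–period integrals of the first integral $\dot y^2=G(y)+v^2$ of \eqref{eq4}, and to reduce everything to a single hard statement: the monotonicity of $T$. Throughout I would use three structural facts already available: that $G$ is strictly decreasing on $[0,y^*]$ with $y^*:=\sqrt{\eta^2+2}-3\eta$, so the turning point $y(v)=G^{-1}(-v^2)$ is strictly increasing on $(0,v_0)$ with $y(v)\uparrow y^*$ as $v\uparrow v_0$; that $v<y(v)$; and the algebraic identity $G(v)+v^2=v^2\left(\tfrac12 v+2\eta\right)^2$, i.e. $\sqrt{G(v)+v^2}=\tfrac12 v^2+2\eta v$, which is what singles out $z=v$ as the lower endpoint of the integral defining $H$.

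For the divergence $\lim_{v\to v_0}T(v)=+\infty$ I would argue by comparison. Since $v\le v_0$ we have $G(z)+v^2\le G(z)+v_0^2$, whence $T(v)\ge\int_0^{y(v)}(G(z)+v_0^2)^{-1/2}\,dz$. As $v\uparrow v_0$ the upper limit increases to $y^*$, and the limiting integral $\int_0^{y^*}(G(z)+v_0^2)^{-1/2}\,dz$ is infinite: at $z=y^*$ the integrand has a non-integrable singularity, because $G(\cdot)+v_0^2$ has a double zero there ($G(y^*)=-v_0^2$ and $G'(y^*)=0$, so locally $G(z)+v_0^2=\tfrac12 G''(y^*)(z-y^*)^2+o((z-y^*)^2)$ with $G''(y^*)>0$). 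Monotone convergence then gives $T(v)\to+\infty$.

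For $H$ I would reduce to $T$. Writing $H(v)=T(v)-K(v)$ with $K(v):=\int_0^v(G(z)+v^2)^{-1/2}\,dz$ (legitimate since $v<y(v)$ keeps $G+v^2$ positive on $[0,v]$), the scaling $z=vs$ gives $K(v)=\int_0^1 R(v,s)^{-1/2}\,ds$ with $R(v,s)=\tfrac14 v^2 s^4+2\eta v s^3+(4\eta^2-1)s^2+1$. Since $\partial_v R(v,s)=s^3\left(\tfrac12 vs+2\eta\right)\ge0$, the integrand is decreasing in $v$, so $K$ is strictly decreasing; hence $H=T-K$ is increasing the moment $T$ is. The entire lemma therefore rests on showing that $T$ is increasing.

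For $T$ I would first remove both endpoint degeneracies (the turning-point singularity at $z=y(v)$ and the vanishing of $G'$ at $z=0$) through the substitution $G(z)=-v^2\sin^2\theta$, $\theta\in[0,\tfrac\pi2]$, which turns $T$ into the proper integral $T(v)=\int_0^{\pi/2}\frac{2v\sin\theta}{-G'(Z)}\,d\theta$, where $Z=Z(v,\theta)\in[0,y(v)]$ solves $G(Z)=-v^2\sin^2\theta$. Differentiating, with $\partial_v Z=2v\sin^2\theta/(-G'(Z))$, yields $T'(v)=\int_0^{\pi/2}\Big(\frac{2\sin\theta}{-G'(Z)}+\frac{4v^2\sin^3\theta\,G''(Z)}{(-G'(Z))^3}\Big)\,d\theta$, and the crux is the positivity of this quantity. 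The hard part is that the second term is not signed: $G''(z)=3z^2+12\eta z+8\eta^2-2$ is negative at $z=0$ (as $\eta<\tfrac12$) and positive for larger $z$, so no pointwise bound suffices — this is the same obstruction one meets when trying to invoke convexity of $G^{-1}$. I see two ways through. One is to integrate the second term by parts in $\theta$, using $\frac{4v^2\sin^3\theta\,G''}{(-G')^3}=\frac{2\sin^2\theta}{\cos\theta}\,\frac{d}{d\theta}\frac{1}{-G'(Z)}$, and to cancel the (individually divergent) boundary contributions at $\theta=\tfrac\pi2$ against the first term to leave a manifestly positive remainder. The more robust way exploits that $G(z)+v^2$ is a quartic: factoring over its roots writes $T$ as a complete elliptic integral of the first kind, after which monotonicity follows by tracking the modulus and prefactor in $v$. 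This is transparent when $\eta=0$, where $G(z)+v^2=\tfrac14(y(v)^2-z^2)(b(v)^2-z^2)$ with $b(v)^2=2(1+\sqrt{1-v^2})$ and $T(v)=\tfrac{2}{b(v)}K\!\big(y(v)/b(v)\big)$, a product of two factors each increasing in $v$; the work for general $\eta\in[0,\tfrac12)$ is to push this through when the cubic term makes all four roots move at once.
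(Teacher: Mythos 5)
Your treatment of the divergence $\lim_{v\to v_0}T(v)=+\infty$ is sound (double zero of $G(\cdot)+v_0^2$ at $y^*=\sqrt{\eta^2+2}-3\eta$ plus monotone convergence), and your reduction of $H$ to $T$ via $H=T-K$ with $K(v)=\int_0^v(G(z)+v^2)^{-1/2}dz$, rescaled by $z=vs$ so that $\partial_v R(v,s)=s^3(\tfrac12 vs+2\eta)\ge 0$, is correct and arguably cleaner than the paper's route (the paper instead shows $v\mapsto v/y(v)$ is decreasing using $y'(v)=-2v/G'(y(v))$ and the identity $G'(y(v))y(v)+2v^2=\tfrac12 y(v)^4+2\eta y(v)^3>0$). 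But the lemma's central claim --- that $T$ is increasing --- is exactly the point you leave unproven. You correctly identify that the sign of $G''$ changes on $[0,y(v)]$ so no pointwise bound works after your substitution $G(Z)=-v^2\sin^2\theta$, and then you offer two escape routes, neither carried out: the integration by parts with "cancellation of divergent boundary terms leaving a manifestly positive remainder" is asserted, not exhibited; and the elliptic-integral factorization is verified only at $\eta=0$, where the absence of the cubic term $2\eta z^3$ makes the quartic biquadratic --- for $\eta\in(0,\tfrac12)$ the four roots are not paired as $\pm a,\pm b$, the reduction to Legendre form requires a Mobius change of variable, and the claimed monotonicity of modulus and prefactor is not checked. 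As it stands the proof of the key assertion is missing.

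The fix is a different normalization: rescale by the turning point rather than by $v$. Setting $p=z/y(v)$ gives
\begin{equation*}
T(v)=\int_0^1\frac{dp}{\sqrt{P(v,p)}},\qquad P(v,p)=\tfrac14 y(v)^2p^4+2\eta\, y(v)p^3+(4\eta^2-1)p^2+\frac{v^2}{y(v)^2},
\end{equation*}
and the constraint $G(y(v))+v^2=0$ becomes $P(v,1)=0$ identically in $v$, hence $\partial_vP(v,1)=0$. Since $y(v)$ and $y'(v)$ are positive, the $p$-dependent part of $\partial_vP(v,p)$, namely $\tfrac12 y y'p^4+2\eta y'p^3$, is increasing in $p$ on $[0,1]$ while the remaining term $\frac{d}{dv}(v^2/y(v)^2)$ is independent of $p$; therefore $\partial_vP(v,p)\le\partial_vP(v,1)=0$ for all $p\in[0,1]$, the integrand of $T$ is nondecreasing in $v$ pointwise, and $T$ is increasing. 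This one observation closes the gap, and your $H=T-K$ argument then goes through as written.
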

\begin{proof}
By the definition, we have $G(y)+v^2=(\frac{y^2}{2}+2\eta |y|)^2+v^2-y^2$, from which we can conclude that $y(v) \geq v$, and therefore $H(v)$ is positive. 

By change of variable $p=\frac{z}{y(v)}$, we obtain 
\begin{equation*}
T(v)=\bigintss_0^1 \frac{dp}{\sqrt{\frac{G(y(v)p)}{y(v)^2}+\frac{v^2}{y(v)^2} }}= \bigintss_0^1 \frac{dp}{\sqrt{ \frac{1}{4}y(v)^2p^4+2\eta y(v)p^3+(4{\eta}^2-1)p^2+\frac{v^2}{y(v)^2} }}.
\end{equation*}
Denote the square of the bottom of the integrand by $P(v,p)$, i.e., 
$$P(v,p):= \frac{1}{4}y(v)^2p^4+2\eta y(v)p^3+(4{\eta}^2-1)p^2+\frac{v^2}{y(v)^2}.$$
To prove $T(v)$ is increasing, it suffices to show that $P(v,p)$ is decreasing with respect to $v$ for any fixed $p \in [0,1]$.

Since $y(v)$ is an increasing function of $v$, the derivative $\frac{dP}{dv}(v,p)$ is no larger than $\frac{dP}{dv}(v,1)$, which is equal to $0$ according to the definition of $y(v)$, 
$$\frac{dP}{dv}(v,1)=\frac{d (G(y(v))+v^2)}{dv}=0.$$
Therefore $P(v_1,p) \geq P(v_2,p)$ for any $p \in [0,1], 0<v_1< v_2 < v_0$. 

We can also rewrite $H(v)$ as 
\begin{equation*}
H(v)=\int_{\frac{v}{y(v)}}^1 \frac{dp}{\sqrt{P(v,p)}},
\end{equation*}
and it is enough to show that $v \mapsto \frac{v}{y(v)}$ is decreasing. Taking derivative of the following equation with respect to $v$, 
$$G(y(v))+v^2=0,$$
we get $\frac{dy(v)}{dv}=-\frac{2v}{G^{'}(y(v))},$ and thus $$\frac{d}{dv}( \frac{v}{y(v)}     )=\frac{y(v)-v\frac{dy(v)}{dv}}{y(v)^2}=\frac{y(v)+\frac{2v^2}{G^{'}(y(v))}}{y(v)^2}.$$
As a result of $\frac{dy(v)}{dv} \geq 0$, we obtain that $G^{'}(y(v))<0$ and $\frac{d}{dv}( \frac{v}{y(v)} ) \leq 0$ is equivalent to $G^{'}(y(v))y(v)+2v^2 \geq 0.$ We conclude our claim by the following computation,
\begin{equation*}
\begin{aligned}
G^{'}(y(v))y(v)+2v^2 & =G^{'}(y(v))y(v)+2v^2-2(G(y(v))+v^2) \\
&=\frac{1}{2}y(v)^4+2 \eta y(v)^3>0  \\
\end{aligned}
\end{equation*}

In the end, it can be seen that the function $z \mapsto G(z)+v_0^2$  
is always positive over the interval $[0, +\infty)$ and only attains $0$ at $z=\sqrt{{\eta}^2+2}-3\eta$. Since $G(z)+v_0^2$ is a polynomial, we obtain that $y(v_0)=\sqrt{{\eta}^2+2}-3\eta$, $(z-\sqrt{{\eta}^2+2}+3\eta)^2$ is a factor of $G(z)+v_0^2$, and hence $$\lim\limits_{v \to v_0}T(v)= \int_0^{\sqrt{{\eta}^2+2}-3\eta} \frac{dz}{\sqrt{G(z)+v_0^2}}=+\infty.$$
\end{proof}

For each $k \in \mathbb{N}$, define $T_k(v):=(2k-1)T(v)+H(v)$ if $|v| \in (0,v_0)$, and $T_k(v):=+\infty$ if $|v|>v_0$. Now we show that for $v \not =0$, $\{T_k(v): k \in \mathbb{N}\}$ is the set of times $T$ such that $x_v(T)$ attains $0$ ($T_k(v)=+\infty$ for $|v| \geq v_0$ simply implies that $x_v(t)$ never reaches $0$ for those $v$). As a result of Lemma~\ref{y_v}, the function $x_v(T)$ can equal to $0$ only if $\eta < \frac{1}{2}, |v| \in (0, v_0)$ or $v=0$. Setting $x_v(T)=0$, by \eqref{x_v} we get 
\begin{equation*}
\begin{aligned}
0=x_v(T) &=\frac{1}{2}y_v(T)|y_v(T)|+2 \eta y_v(T)+\frac{d}{dt} y_v(T) \\
&=\frac{1}{2}y_v(T)|y_v(T)|+2 \eta y_v(T)+\sign(\frac{d}{dt} y_v(T)) \sqrt{G(y_v(T))+v^2 }.
\end{aligned}
\end{equation*}
Moving the last term to the left, taking square of both sides and plugging in the formula of $G(y)$, it becomes 
$$(\frac{1}{2}y_v(T)|y_v(T)|+2 \eta y_v(T))^2+v^2- (y_v(T))^2=(\frac{1}{2}y_v(T)|y_v(T)|+2 \eta y_v(T))^2,$$
which is equivalent to $v^2- (y_v(T))^2=0$. Therefore we obtain  that $|y_v(T)|=v, \sign(y_v(T))=- \sign(\frac{d}{dt} y_v(T))$, from which we conclude that $x_v(T)=0$ if and only if $T=T_k(v)$ or $v=0$. 

Therefore $T_1(v)$ is the first time $x_v(t)$ reaches $0$. Taking $T_k(0+):=\lim\limits_{v \downarrow 0} T_k(v)$, it can be seen that for $t \leq T_1(0+), v \not = 0$, we have $x_v(t) \not =0$. 
Before computing the number of solutions, we still need one more result, which is also important for us to construct the entropy solution of the master equation in the next section.

\begin{lemma}\label{10}
Suppose $\eta < \frac{1}{2}$. Then for any $(x,t) \in \mathbb{R}  \times \mathbb{R}_+ \setminus \{0\} \times \mathbb{R}_+$, there exists a unique $v(x,t) \in \mathbb{R}_+ $ such that $x_v(t)=x, t< T_1(v)$ (simply take $v(x,t)=0$ if $x=0$).
\end{lemma}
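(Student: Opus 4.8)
The plan is to fix $t>0$ and $x>0$ and prove that $v\mapsto x_v(t)$ is a continuous, strictly increasing bijection from the admissible set $D_t:=\{v>0:\,T_1(v)>t\}$ onto $(0,+\infty)$; the value $x=0$ gives $v=0$, the value $t=0$ gives $v=x$ (since $x_v(0)=v$), and $x<0$ is handled by the oddness $y_{-v}=-y_v,\ x_{-v}(t)=-x_v(t)$ together with the corresponding $v<0$. The shape of $D_t$ comes straight from \lemref{monotone}: there $v\mapsto T_1(v)=T(v)+H(v)$ is continuous and strictly increasing on $(0,v_0)$ with $T_1(v)\to+\infty$ as $v\to v_0$, while $T_1(v)=+\infty$ for $v\ge v_0$. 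Hence $D_t=(v_*(t),+\infty)$, where $v_*(t)=T_1^{-1}(t)\in(0,v_0)$ if $t\ge T_1(0+)$ and $v_*(t)=0$ otherwise.

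First I would establish existence by the intermediate value theorem. Continuity of $v\mapsto x_v(t)$ on $D_t$ is the continuous dependence of the $\mathcal C^1$ solution of \eqref{eq4} on the initial velocity $v$ furnished by \lemref{y_v}. For the boundary values: as $v\downarrow v_*(t)$ one has $T_1(v)\downarrow t$, so by continuity $x_v(t)\to x_{v_*(t)}\big(T_1(v_*(t))\big)=0$ (with $x_0\equiv0$ covering $v_*(t)=0$); as $v\to+\infty$ the solution is eventually non-periodic and monotone with $y_v(t)\to+\infty$, whence $x_v(t)=\tfrac12 y_v(t)^2+2\eta y_v(t)+\sqrt{G(y_v(t))+v^2}\to+\infty$. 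Thus every $x\in(0,+\infty)$ is attained.

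The whole difficulty is the uniqueness, i.e. the strict monotonicity $\partial_v x_v(t)>0$ on $D_t$. On the non-periodic range $v\ge v_0$, where $G(\cdot)+v^2>0$, the comparison argument of the preceding Proposition applies verbatim: $v_1<v_2$ forces $y_{v_1}(t)<y_{v_2}(t)$ and $\tfrac{d}{dt}y_{v_1}(t)<\tfrac{d}{dt}y_{v_2}(t)$, hence $x_{v_1}(t)<x_{v_2}(t)$. On the periodic range $v\in(0,v_0)$ I would pass, on the first arc (where $y_v>0$), to the planar reverse-time system solved by $(x_v,y_v)$,
\begin{equation*}
\frac{d}{dt}x_v=(y_v+2\eta)x_v-y_v,\qquad \frac{d}{dt}y_v=x_v-\tfrac12 y_v^2-2\eta y_v,\qquad x_v(0)=v,\ y_v(0)=0,
\end{equation*}
and to its variational system in $v$, writing $X:=\partial_v x_v,\ Y:=\partial_v y_v$,
\begin{equation*}
\dot X=(y_v+2\eta)X+(x_v-1)Y,\qquad \dot Y=X-(y_v+2\eta)Y,\qquad X(0)=1,\ Y(0)=0 .
\end{equation*}
The goal is $X>0$ on $(0,T_1(v))$. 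Two boundary facts are free: $X(0)=1>0$, and since $x_v(T_1(v))\equiv 0$ while $\dot x_v(T_1(v))=-v$, differentiating in $v$ and invoking \lemref{monotone} gives $X(T_1(v)^-)=v\,T_1'(v)>0$.

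The hard part is to exclude an intermediate sign change of $X$. The main leverage is that $v<v_0\le 1$ (with $v_0\le 1$ because $v_0^2=-\min_{y\ge0}G(y)$, which equals $1$ only at $\eta=0$ and decreases to $0$ as $\eta\uparrow\tfrac12$): thus $x_v(0)=v<1$, and at any interior critical point of $t\mapsto x_v(t)$ one has $x_v=\tfrac{y_v}{y_v+2\eta}\le1$, so a boundary/interior maximum analysis forces $x_v(t)<1$ throughout $(0,T_1(v))$. Consequently, at any zero $t_*$ of $X$ we get $\dot X(t_*)=(x_v(t_*)-1)Y(t_*)$ with $x_v(t_*)-1<0$. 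I would then track the Prüfer angle $\phi:=\arctan(Y/X)$ (so that $X>0\Leftrightarrow\phi\in(-\tfrac\pi2,\tfrac\pi2)$), for which
\begin{equation*}
\dot\phi=\frac{X^2-2(y_v+2\eta)XY+(1-x_v)Y^2}{X^2+Y^2},\qquad \dot\phi\big|_{X=0}=1-x_v>0,
\end{equation*}
and try to show $\phi$ never reaches $\tfrac\pi2$ on $(0,T_1(v))$. I expect this rotation bound to be the genuine obstacle: the sign condition $\dot\phi|_{X=0}>0$ by itself does not prevent $\phi$ from winding, so one must quantitatively control the accumulated rotation, using the explicit periodic solution of \lemref{y_v}, the special variational solution $\dot y_v$ with Sturm separation, or a direct estimate of $\partial_v x_v$ from $t=\int_0^{y_v(t)}dz/\sqrt{G(z)+v^2}$ in the spirit of \lemref{monotone}. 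Granting $X>0$, the map $v\mapsto x_v(t)$ is a continuous strictly increasing bijection of $D_t$ onto $(0,+\infty)$, which yields the unique $v(x,t)$, and oddness extends the conclusion to $x<0$.
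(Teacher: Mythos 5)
Your overall architecture (continuity of $v\mapsto x_v(t)$, strict monotonicity on the admissible set, then the intermediate value theorem, with oddness handling $x<0$) matches the paper's Step 1--Step 5 structure, and your treatment of the non-periodic range $v\ge v_0$ and of the endpoint values is essentially the paper's. But the proof has a genuine gap exactly where you flag it: the strict monotonicity of $v\mapsto x_v(t)$ on the periodic range $v\in(0,v_0)$ for $t$ up to $T_1(v)$ is the heart of the lemma, and your Pr\"ufer-angle scheme does not deliver it. You establish $X(0)=1>0$ and $X(T_1(v)^-)=vT_1'(v)>0$ and the sign condition $\dot\phi|_{X=0}=1-x_v>0$, but, as you yourself note, this does not exclude the angle winding past $\pi/2$ and back; ``granting $X>0$'' is precisely assuming the conclusion. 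A second, smaller inaccuracy: the admissible set is not $(v_*(t),+\infty)$, since for $v$ above the critical value $u(t)$ defined by $\int_0^{+\infty}dz/\sqrt{G(z)+u(t)^2}=t$ the solution $y_v$ blows up before time $t$; surjectivity onto large $x$ comes from $x_v(t)\to+\infty$ as $v\uparrow u(t)$, not as $v\to+\infty$.

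The paper closes the monotonicity gap by a different and more elementary device. It first proves the ordering $y_{v_1}(t)<y_{v_2}(t)$ for $0<v_1<v_2\le v_0$ and $t\in(0,T_1(v_1)]$ directly from the implicit representation $t=\int_0^{y_v(t)}dz/\sqrt{G(z)+v^2}$ together with \lemref{monotone} (Step 1). Then, instead of a variational equation, it argues by contradiction at a putative \emph{last} crossing time $t=\sup\{t:x_{v_1}(t)=x_{v_2}(t),\,t\le T_1(v_1)\}$: using $x_{v}=\tfrac12 y_{v}|y_{v}|+2\eta y_{v}\pm\sqrt{G(y_{v})+v^2}$ and the ordering of the $y$'s, it computes $\frac{d}{dt}(x_{v_2}-x_{v_1})$ at the crossing and shows it is strictly negative, reducing the required sign to explicit polynomial inequalities in $G$ (for instance $v_2^2+4\eta y(v_2)-1=-(\tfrac12 y(v_2)^2+2\eta y(v_2)-1)^2<0$); since $x_{v_2}$ must still decrease to $0$ at the later time $T_1(v_2)>T_1(v_1)$, the curves would have to cross again, contradicting maximality. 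If you want to salvage your route, you would need a quantitative rotation bound for the variational system over a full arc, which is likely harder than the paper's algebraic computation; alternatively, adopt the last-crossing argument, for which your planar system and the bound $x_v<1$ on $(0,v_0)$ are useful ingredients.
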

\begin{proof}
\textbf{Step 1.} For any $0<v_1<v_2 \leq v_0$, we prove that $y_{v_1}(t) < y_{v_2}(t), \forall t \in (0, T_1(v_1)]$. Otherwise suppose $y_{v_1}(t)=y_{v_2}(t)$ for some $t \in (0, T_1(v_1)]$. If $t \leq T(v_1)$, as in the proof of Lemma~\ref{y_v} we have 
\begin{equation}
t=\int_0^{y_{v_1}(t)} \frac{dz}{\sqrt{G(z)+v_1^2}} =\int_0^{y_{v_2}(t)} \frac{dz}{\sqrt{G(z)+v_2^2}},
\end{equation}
which is impossible since $G(z)+v_1^2<G(z)+v_2^2$. If $t \in (T(v_1), T(v_2)]$, then $y_{v_2}(t)> y_{v_2}(T(v_1))> y_{v_1}(T(v_1))>y_{v_1}(t)$, which is contradictory to our assumption. If $t \in (T(v_2), T_1(v_1)]$, we have 
\begin{equation*}
\begin{aligned}
2T(v_1)-t = \int_0^{y_{v_1}(t)} \frac{dz}{\sqrt{G(z)+v_1^2}} > \int_0^{y_{v_2}(t)} \frac{dz}{\sqrt{G(z)+v_2^2}}= 2T(v_2)-t,
\end{aligned}
\end{equation*}
which contradicts to \lemref{monotone}.

\textbf{Step 2.} For any $v_0 \leq v_1<v_2,   t \in \big(0, \int_0^{+\infty} \frac{dz}{\sqrt{G(z)+v^2}}\big]$, we have $y_{v_1}(t)<y_{v_2}(t)$, which can be proved as in \textbf{Step 1}.

\textbf{Step 3.} For any $0<v_1<v_2 \leq v_0$, we prove that $x_{v_1}(t)<x_{v_2}(t), \forall t \in [0, T_1(v_1)]$. Otherwise suppose $t= \sup \{ t: x_{v_1}(t)=x_{v_2}(t), t \leq T_1(v_1) \} $, where supreme is attained by the continuity of $x_{v_1}(.)$ and $x_{v_2}(.)$. To show the contradiction, we prove that $\frac{d}{dt}(x_{v_2}(t)-x_{v_1}(t)) < 0$, in which case these two curves have to intersect after time $t$ since $x_{v_2}$ decreases to $0$ at time $T_1(v_2) > T_1(v_1)$. 

If $t \geq T(v_1)$, we have 
\begin{equation*}
\begin{aligned}
x_{v_1}(t) & =\frac{1}{2}y_{v_1}(t)^2+2 \eta y_{v_1}(t)- \sqrt{G(y_{v_1}(t))+v_1^2}  \\
&=\frac{1}{2}y_{v_2}(t)^2+2 \eta y_{v_2}(t) + \sign(\frac{d}{dt} y_{v_2}(t))  \sqrt{G(y_{v_2}(t))+v_2^2}= x_{v_2}(t). \\
\end{aligned}
\end{equation*}
Since we proved $y_{v_1}(t)< y_{v_2}(t)$, the derivative $\frac{d}{dt} y_{v_2}(t)$ must be negative, and hence 
\begin{equation}\label{==}
\frac{1}{2}y_{v_1}(t)^2+2 \eta y_{v_1}(t)- \sqrt{G(y_{v_1}(t))+v_1^2}  
=\frac{1}{2}y_{v_2}(t)^2+2 \eta y_{v_2}(t) -  \sqrt{G(y_{v_2}(t))+v_2^2}.
\end{equation}
Combining \eqref{==} and  $\frac{d}{dt} y_{v_i}(t)=-\sqrt{ G(y_{v_i}(t))+v_i^2}, i=1,2$ , we obtain 
\begin{equation*}
\begin{aligned}
\frac{d}{dt}(x_{v_2}(t)-x_{v_1}(t)) = & y_{v_1}(t)\bigg(\sqrt{G(y_{v_1}(t))+v_1^2}-\frac{1}{2}y_{v_1}(t)^2-2 \eta y_{v_1}(t) +1) \bigg) \\
& -y_{v_2}(t)\bigg(\sqrt{G(y_{v_2}(t))+v_2^2}-\frac{1}{2}y_{v_2}(t)^2-2 \eta y_{v_2}(t) +1) \bigg). \\
\end{aligned}
\end{equation*}
Because of \eqref{==} and the fact that $y_{v_2}(t) > y_{v_1}(t)$, we deduce that  $\frac{d}{dt}(x_{v_2}(t)-x_{v_1}(t)) <0$ is equivalent to $\sqrt{G(y_{v_2}(t))+v_2^2}-\frac{1}{2}y_{v_2}(t)^2-2 \eta y_{v_2}(t) +1>0$, which is true since 
\begin{equation*}
\begin{aligned}
\sqrt{G(y_{v_2}(t))+v_2^2}-\frac{1}{2}y_{v_2}(t)^2-2 \eta y_{v_2}(t) +1  & > -\frac{1}{2}y_{v_2}(t)^2-2 \eta y_{v_2}(t) +1 \\
                      &> -\frac{1}{2}(\sqrt{{\eta}^2+2}-3\eta)^2 - 2 \eta (\sqrt{{\eta}^2}+2-3\eta)+1 >0.
\end{aligned}
\end{equation*}

If $t< T(v_1)$, by the same reasoning we have 
$$\frac{1}{2}y_{v_1}(t)^2+2 \eta y_{v_1}(t)+ \sqrt{G(y_{v_1}(t))+v_1^2}  
=\frac{1}{2}y_{v_2}(t)^2+2 \eta y_{v_2}(t) + \sqrt{G(y_{v_2}(t))+v_2^2},$$ 
and also
\begin{equation*}
\begin{aligned}
\frac{d}{dt}(x_{v_2}(t)-x_{v_1}(t)) = & y_{v_2}(t)\bigg(\sqrt{G(y_{v_2}(t))+v_2^2}+\frac{1}{2}y_{v_2}(t)^2+2 \eta y_{v_2}(t) -1 \bigg) \\
& -y_{v_1}(t)\bigg(\sqrt{G(y_{v_1}(t))+v_1^2}+\frac{1}{2}y_{v_1}(t)^2+2 \eta y_{v_1}(t) -1 \bigg). \\
\end{aligned}
\end{equation*}
Accordingly, it suffices to show that $\bigg(\sqrt{G(y_{v_2}(t))+v_2^2}+\frac{1}{2}y_{v_2}(t)^2+2 \eta y_{v_2}(t) -1 \bigg)<0,$
which is equivalent to 
\begin{equation}\label{=}
\sqrt{G(y_{v_2}(t))+v_2^2}<1-\frac{1}{2}y_{v_2}(t)^2-2 \eta y_{v_2}(t).
\end{equation}
Taking square of \eqref{=} , we obtain the equivalent inequality $v_2^2+4 \eta y_{v_2}(t)-1<0.$ Since  $y_{v_2}(t) \leq  y(v_2)$, we conclude our claim by the following computation 
\begin{equation*}
\begin{aligned}
v_2^2+4 \eta y_{v_2}(t)-1 \leq &  v_2^2+4\eta y(v_2)-1  = -G(y(v_2))+4 \eta y(v_2)-1 \\
= & -(\frac{1}{2}y(v_2)^2+2\eta y(v_2)-1)^2 <0.
\end{aligned}
\end{equation*}

\textbf{Step 4.}  For any $v_0 \leq v_1 < v_2, t \in \big(0, \int_0^{+\infty} \frac{dz}{\sqrt{G(z)+v^2}} \big]$, we have $x_{v_1}(t)< x_{v_2}(t)$, which can be proved as in \textbf{Step 3}. 

\textbf{Step 5.} Until now we have shown that the stopped curves $\{x_v(t): 0 \leq t < T_1(v)\}$ do not intersect, and it remains to prove that for any $(x,t) \in \mathbb{R}_+ \times \mathbb{R}_+$, there exists a $v(x,t) \in \mathbb{R}_+$ such that $x_v(t)=x, t < T_1(v)$. 
Note that according to \eqref{eq4}, for any fixed $t$, the couple $(y_v(t), \frac{d}{dt}y_v(t))$ is continuous with respect to the initial velocity $v$, and thus the mapping $v \mapsto x_v(t)$ is also continuous. 

First suppose $x< x_{v_0}(t)$ and $t \leq T_1(0+)$. As a result of  $\lim\limits_{v \to 0} x_v(t)=0, \lim\limits_{v \to v_0} x_v(t)=x_{v_0}(t)$ and the continuity of $v \mapsto x_v(t)$, we know that there must exist some $v \in (0,v_0)$ such that $x_v(t)=x$. The equality $t< T_1(v)$ simply follows from the inequality $t \leq T_1(0+) < T_1(v)$. 

Suppose  $x<x_{v_0}(t)$ and $t > T_1(0+)$.  Since $T_1(v)$ increases to $+\infty$ as $v$ increases to $v_0$, we know that there exists a unique $v' \in (0, v_0)$ such that $t=T_1(v')$, which also implies $x_{v'}(t)=0$.  According to the continuity of $v' \mapsto x_{v'}(t)$, and the fact that $\lim\limits_{v \to v_0} x_v(t)=x_{v_0}(t)$, we know there must exist a $v>v'$ such that $x_{v}(t)=x$, and $t=T_1(v')<T_1(v)$. 

In the end suppose $x>x_{v_0}(t)$. Because the mapping $v \mapsto \int_0^{+\infty} \frac{dz}{\sqrt{G(z)+{v}^2} }$ is decreasing from $+\infty$ to $0$ over the interval $(v_0, +\infty)$, there exists a unique $v' > v_0$ such that $\int_0^{+\infty} \frac{dz}{\sqrt{G(z)+{v'}^2} }=t$, which also implies $x_{v'}(t)=+\infty$. Again by the continuity of $v \mapsto x_v(t)$ and the fact that $\lim\limits_{v \to v_0} x_v(t)=x_{v_0}(t)<x$, there exists a $v>v_0$ such that $x_v(t)=x$.
\end{proof}

\begin{prop}
Suppose $\eta < \frac{1}{2}$. Then there exists a unique solution to \eqref{eq4} for any $T >0$ if $|2 \bar{\theta}-1| \geq 1-{\eta}^2-\eta\sqrt{{\eta}^2+2}$, and the number of solutions to \eqref{eq4} can be arbitrarily large if $|2 \bar{\theta}-1| < 1-{\eta}^2-\eta\sqrt{{\eta}^2+2}$ and  $T$ is large enough. In particular, the number of solutions  with boundary condition $2\bar{\theta}-1=0$ is given by $$1+2 \sup\limits_{k \in \mathbb{N}} \{k : T_k(0+) <T \}.$$
\end{prop}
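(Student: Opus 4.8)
The number of solutions to \eqref{eq4} equals the number of initial velocities $v$ with $x_v(T)=c$, where $c:=2\bar\theta-1$. Since \eqref{eq4} and $v\mapsto x_v(T)$ are odd, I would fix $c\ge 0$ and record the contributions of $v>0$, $v=0$, and $v<0$ separately, the last via $x_{-w}(T)=-x_w(T)$. The whole argument is organised around the threshold $c^*:=1-\eta^2-\eta\sqrt{\eta^2+2}$, and the first step is the algebraic identity $c^*=\tfrac12 y(v_0)^2+2\eta\,y(v_0)$, where $y(v_0)=\sqrt{\eta^2+2}-3\eta$ is the minimiser of $G$ on $[0,+\infty)$. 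The key structural claim to establish is that $c^*$ is the supremum of $x_v(t)$ over all bounded orbits $v\in(0,v_0]$, $t\ge0$, and that it is not attained.

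To prove this claim I would analyse the separatrix $v=v_0$. Because $G(z)+v_0^2$ has a double root at $y(v_0)$, the solution $y_{v_0}$ increases monotonically and converges to $y(v_0)$ only as $t\to\infty$, so that $x_{v_0}(t)=\tfrac12 y_{v_0}(t)^2+2\eta\,y_{v_0}(t)+\dot y_{v_0}(t)\nearrow c^*$. The heart of the matter is the monotonicity $\tfrac{d}{dy}\big(\tfrac12 y^2+2\eta y+\sqrt{G(y)+v_0^2}\big)\ge 0$ on $[0,y(v_0)]$; at the right endpoint it reduces to $(\sqrt{\eta^2+2}-\eta)^2\ge 2+\eta^2-3\eta\sqrt{\eta^2+2}$, whose difference is $\eta^2+\eta\sqrt{\eta^2+2}\ge0$. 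Since for $v<v_0$ the closed orbits are nested strictly inside the separatrix (by the non-crossing established in \lemref{10}), this yields $\sup_t x_v(t)<c^*$. I expect this monotonicity along the separatrix to be the main obstacle, since it is the only place the exact value of $c^*$ enters.

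Granting the claim, part (A) is immediate. If $|c|\ge c^*$ no bounded orbit reaches level $c$, and $v<0$ contributes nothing because $x_v(t)>0$ for all $t$ when $v>v_0$ (there $T_1(v)=+\infty$); hence every solution lies on the unbounded branch. On $[v_0,v')$, where $v'$ solves $\int_0^{+\infty}\frac{dz}{\sqrt{G(z)+{v'}^2}}=T$, \textbf{Step 4} in the proof of \lemref{10} makes $v\mapsto x_v(T)$ strictly increase from $x_{v_0}(T)<c^*$ to $+\infty$, so the intermediate value theorem gives exactly one solution.

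For the exact count at $c=0$ in (C) I would use the already-derived equivalence $x_v(T)=0\iff T=T_k(v)$ for some $k$, or $v=0$. Writing $T_k(v)=(2k-1)T(v)+H(v)$ and invoking \lemref{monotone}, each $T_k$ is a strictly increasing bijection of $(0,v_0)$ onto $(T_k(0+),+\infty)$, so $T_k(v)=T$ has a unique root in $(0,v_0)$ precisely when $T_k(0+)<T$, with distinct $k$ giving distinct roots; doubling by oddness and adding $v=0$ produces $1+2\sup_k\{k:T_k(0+)<T\}$. Finally, for the unboundedness in (B) with $0<|c|<c^*$, I would fix $v_1\in(0,v_0)$ whose orbit amplitude exceeds $|c|$ and set $g(v):=\#\{t\in(0,T]:x_v(t)=c\}$. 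As $v\to v_0$ the first meeting time diverges (the period $4T(v)\to\infty$ by \lemref{monotone}), so $g\equiv0$ near $v_0$, whereas $g(v_1)\ge 2\lfloor T/(4T(v_1))\rfloor-O(1)\to\infty$ as $T\to\infty$. Since $g$ is integer-valued and jumps by $\pm1$ exactly at those $v$ with $x_v(T)=c$, the number of solutions in $(v_1,v_0)$ is at least the number of jumps, hence at least $g(v_1)$, which is arbitrarily large for $T$ large.
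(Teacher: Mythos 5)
Your overall architecture matches the paper's: identify $c^*:=1-\eta^2-\eta\sqrt{\eta^2+2}$ as the limit of $x_{v_0}(t)$, use the non-crossing structure of \lemref{10} to get uniqueness for $|2\bar\theta-1|\ge c^*$, read off the exact count at $0$ from the characterization $x_v(T)=0\iff T=T_k(v)$ together with \lemref{monotone}, and use periodicity to produce arbitrarily many solutions below the threshold. However, there is one genuine gap, and it sits exactly where you yourself flag ``the main obstacle'': you assert the monotonicity of $y\mapsto\tfrac12y^2+2\eta y+\sqrt{G(y)+v_0^2}$ on all of $[0,y(v_0)]$ but only verify the inequality at the right endpoint $y=y(v_0)$. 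The endpoint computation is correct ($(\sqrt{\eta^2+2}-\eta)^2-(2+\eta^2-3\eta\sqrt{\eta^2+2})=\eta^2+\eta\sqrt{\eta^2+2}\ge0$), but it cannot be promoted to the whole interval by any soft argument: at $\eta=0$ one has $G(y)+v_0^2=(\tfrac12y^2-1)^2$, so the function in question is \emph{identically constant} on $[0,\sqrt2]$ and the inequality is everywhere an equality. So the claim is globally tight and genuinely requires exact algebra. The paper supplies it: after squaring, $\frac{d}{dt}x_{v_0}\ge0$ reduces to $2\eta y^3+(4\eta^2-1+v_0^2)y^2+4\eta v_0^2y+4\eta^2v_0^2\ge0$, which holds because the left side factors as $(y-\sqrt{\eta^2+2}+3\eta)^2\bigl(2\eta y+\tfrac{4\eta^2v_0^2}{(\sqrt{\eta^2+2}-3\eta)^2}\bigr)$. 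Without this (or an equivalent identity) the central claim that $\sup_{v\le v_0,\,t}x_v(t)=c^*$, and hence part (A), is unproven.

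Two smaller points. First, your assertion that the supremum $c^*$ is ``not attained'' fails at $\eta=0$, where $x_{v_0}(t)\equiv1=c^*$; this is harmless for uniqueness (by Step 4 of \lemref{10} the unbounded branch is still strictly increasing in $v$) but should not be used as stated. Second, in part (B) your crossing-count function $g(v)$ does not quite behave as claimed: $g$ can jump by $\pm2$ at tangencies where $M(v)=c$, and $g$ need not vanish near $v_0$ when $v_0<c<c^*$ (which occurs for $\eta>0$), since the \emph{upward} crossing time of level $c$ stays bounded as $v\to v_0$ even though the downward one diverges. The conclusion survives because $g$ remains $O(1)$ near $v_0$ once you restrict to $v$ with $M(v)>c$, but the cleaner route is the paper's: parametrize the solutions by $T=t(v)+4kT(v)$ and apply the intermediate value theorem in $v$ for each fixed $k$.
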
  
\begin{proof}
Recalling $v_0=\sqrt{-G(\sqrt{{\eta}^2+2}-3\eta)}$, we first prove that $x_{v_0}(t)$ is increasing with respect to $t$ and $\lim\limits_{t \to +\infty}x_{v_0}(t)=1-{\eta}^2-\eta\sqrt{{\eta}^2+2}$. 

Taking derivative of the following equation, $$x_{v_0}(t)=\frac{1}{2}y_{v_0}(t)y_{v_0}(t)+2 \eta y_{v_0}(t)+\frac{d}{dt} y_{v_0}(t),$$
we get $\frac{d}{dt} x_{v_0}(t)=(y_{v_0}(t)+2 \eta) \frac{d}{dt} y_{v_0}(t) + \frac{1}{2} G^{'}(y_{v_0}(t)) $.
Therefore $x_{v_0}(t)$ is increasing is equivalent to 
\begin{equation}\label{eq60}
(y_{v_0}(t)+2 \eta) \frac{d}{dt} y_{v_0}(t) \geq  - \frac{1}{2} G^{'}(y_{v_0}(t)).
\end{equation}
Since both sides of \eqref{eq60} are positive, it is enough to show that 
$$(y_{v_0}(t)+2 \eta)^2 (\frac{d}{dt} y_{v_0}(t))^2 -\frac{1}{4} (G^{'}(y_{v_0}(t)))^2>0.$$
Plugging in the equality $\frac{d}{dt} y_{v_0}(t)=\sqrt{G(y_{v_0}(t))+v_0^2  }$ and the formula of $G$, 
the inequality becomes 
\begin{equation*}
\begin{aligned}
2\eta (y_{v_0}(t))^3+(4{\eta}^2-1+v_0^2) (y_{v_0}(t))^2+4\eta v_0^2 y_{v_0}(t)+4 {\eta}^2v_0^2 \geq 0.
\end{aligned}
\end{equation*}
Now we finish proving $x_{v_0}(t)$ is increasing by the following equality,
\begin{equation*}
\begin{aligned}
2\eta (y_{v_0}(t))^3+(4{\eta}^2-1+v_0^2) (y_{v_0}(t))^2+4\eta v_0^2 y_{v_0}(t)+4 {\eta}^2v_0^2 \\
\ \ \ \ \ \ = (y_{v_0}(t)-\sqrt{{\eta}^2+2}+3\eta)^2 \bigg(2\eta y + \frac{4{\eta}^2v_0^2 }{(\sqrt{{\eta}^2+2}-3\eta )^2} \bigg)
\end{aligned}
\end{equation*}

Recall Lemma~\ref{y_v}, $y_{v_0}(t)$ is given by the equation
$$t=\int_0^{y_{v_0}(t)} \frac{dz}{\sqrt{G(z)+v_0^2}}. $$
Combining the equality proved in \lemref{monotone} that $\int_0^{\sqrt{{\eta}^2+2}-3\eta} \frac{dz}{\sqrt{G(z)+v_0^2}}=+\infty$, we conclude that $\lim\limits_{t \to +\infty} y_{v_0}(t)=\sqrt{{\eta}^2+2}-3\eta.$ Also, according to \eqref{solution}, we get that $$\lim\limits_{t \to +\infty} \frac{d}{dt}y_{v_0}(t)=\sqrt{G(\sqrt{{\eta}^2+2}-3\eta)+v_0^2 }=0 .$$ 
Therefore by \eqref{x_v}, we conclude the second claim
$$\lim\limits_{t \to +\infty}x_{v_0}(t)=\frac{1}{2} (\sqrt{{\eta}^2+2}-3\eta)^2+2\eta (\sqrt{{\eta}^2+2}-3\eta)=1-{\eta}^2-\eta\sqrt{{\eta}^2+2}.$$

It can be seen that  the curves $\{x_v(t): t \geq 0, v \geq v_0 \}$ never cross each other, and that $x_v(t)< 1-{\eta}^2-\eta\sqrt{{\eta}^2+2}$ for any $ t>0$ if $v<v_0$. Therefore according to Lemma~\ref{10}, if $|2\bar{\theta}-1| \geq 1-{\eta}^2-\eta\sqrt{{\eta}^2+2}$, there exists only one $v \geq v_0$ such that $x_v(T)=2\bar{\theta}-1$. 

Now suppose that $0<2\bar{\theta}-1<1-{\eta}^2-\eta\sqrt{{\eta}^2+2}$. For each $v \in (0,v_0)$, define $$M(v):=\max\limits_{t \geq 0} x_v(t).$$
As a result of Lemma~\ref{10}, $M(v)$ is actually an increasing function, and there exists a unique $\bar{v} \in (0, v_0)$ such that $M(\bar{v})=2\bar{\theta}-1$. Also for any $v \in [\bar{v},v_0)$, we can define $t(v)$ as the unique $t$ satisfying $x_v(t)=2\bar{\theta}-1, t < T_1(v),$ which is also an increasing function of $v$. Then $(x_{v}(.), y_{v}(.))$ is a solution of \eqref{eq3} with time horizon $T=t(v)$. Since the period of $x_v(.)$ is $4T(v)$, and $\lim\limits_{v \to v_0}t(v)=+\infty$, for each $k \in \mathbb{N}$ we know that if $T>t(\bar{v})+4kT(\bar{v})$, there must 
exist some $v' \in [\bar{v}, v_0)$ such that $T=t(v')+4kT(v')$. Therefore we conclude that the number of solutions to \eqref{eq3} with time horizon $T$ is greater than 
$$\sup\limits_{k \in \mathbb{N}}\{k: T \geq t(\bar{v})+4kT(\bar{v})\},$$
which can be arbitrarily large if $T$ is large enough.

In the end, we consider the number of solutions for the terminal condition $2\bar{\theta}-1=0$. We have already shown that $T_k(v)$ is the time when $x_v(t)$ attains zero. According to \lemref{monotone}, the functions $T_k(v)$ are increasing with respect to $v$ for each $k \in \mathbb{N}$ and $\lim\limits_{v \to v_0} T_k(v)=+\infty$. Since $x_{-v}(t)=-x_v(t)$, and $v=0$ is always a solution, the number of solutions is just $$1+2 \{(k,v): T_k(v)=T, k \in \mathbb{N}, v \in (0, v_0)      \}=1+2 \sup\limits_{k \in \mathbb{N}} \{k : T_k(0+) <T \}.$$
\end{proof}

\section{The Master Equation}\label{eq:ME}

Letting $Y(t,\theta)=U(t,1,\theta)-U(t,0,\theta)$, $x=2\theta-1$, and time reverse the master equation \eqref{ME},  we obtain the equation 
\begin{equation}\label{entropy}
\begin{aligned}
\frac{\partial Y}{\partial t}+ \frac{\partial }{\partial x}\bigg(2\eta xY +\frac{xY|Y|}{2}-\frac{Y^2}{2}-\frac{x^2}{2}\bigg)=0,
\end{aligned}
\end{equation}
with the boundary condition $Y(0,x)=0, \forall x \in [-1,1]$. 

Since the equation has the form of a scalar conservation law, there exists a unique entropy solution. By the method of characteristics, we directly construct a piecewise $\mathcal{C}^1$ solution to \eqref{entropy} and then check it is entropic.

Rewriting \eqref{entropy} as 
$$\frac{\partial{Y}}{\partial{t}} +\frac{\partial{Y}}{\partial{x}}(2\eta x-Y+x|Y|)=-2\eta Y -\frac{Y|Y|}{2}+x, $$
and letting $y(t)=Y(t,x(t)), \frac{d}{dt} x=2 \eta x  - y+x|y|$, we obtain the 
characteristic curve of \eqref{entropy} 
\[
\begin{cases}
\frac{d}{dt} x=2 \eta x  - y+x|y|, \\
\frac{d}{dt} y=-2\eta y-\frac{y|y|}{2}+x, \\
y(0)=0, x(0)=\frac{dy}{dt}(0),
\end{cases}
\]
whose solution is given explicitly in Lemma~\ref{y_v}. If $\eta \geq \frac{1}{2}$, the solution given by characteristic curves is smooth everywhere. If $\eta< \frac{1}{2}$, the shock curve is taken to be $\gamma(t)=0, t \in \mathbb{R}_+.$ See our illustration in \figref{curve}.

\begin{figure}[h]
\caption{Characteristic curves, $\eta=0.1, T=3$ on the left; $\eta=0.6, T=1$ on the right.}
\label{curve}
\includegraphics[width=2in]{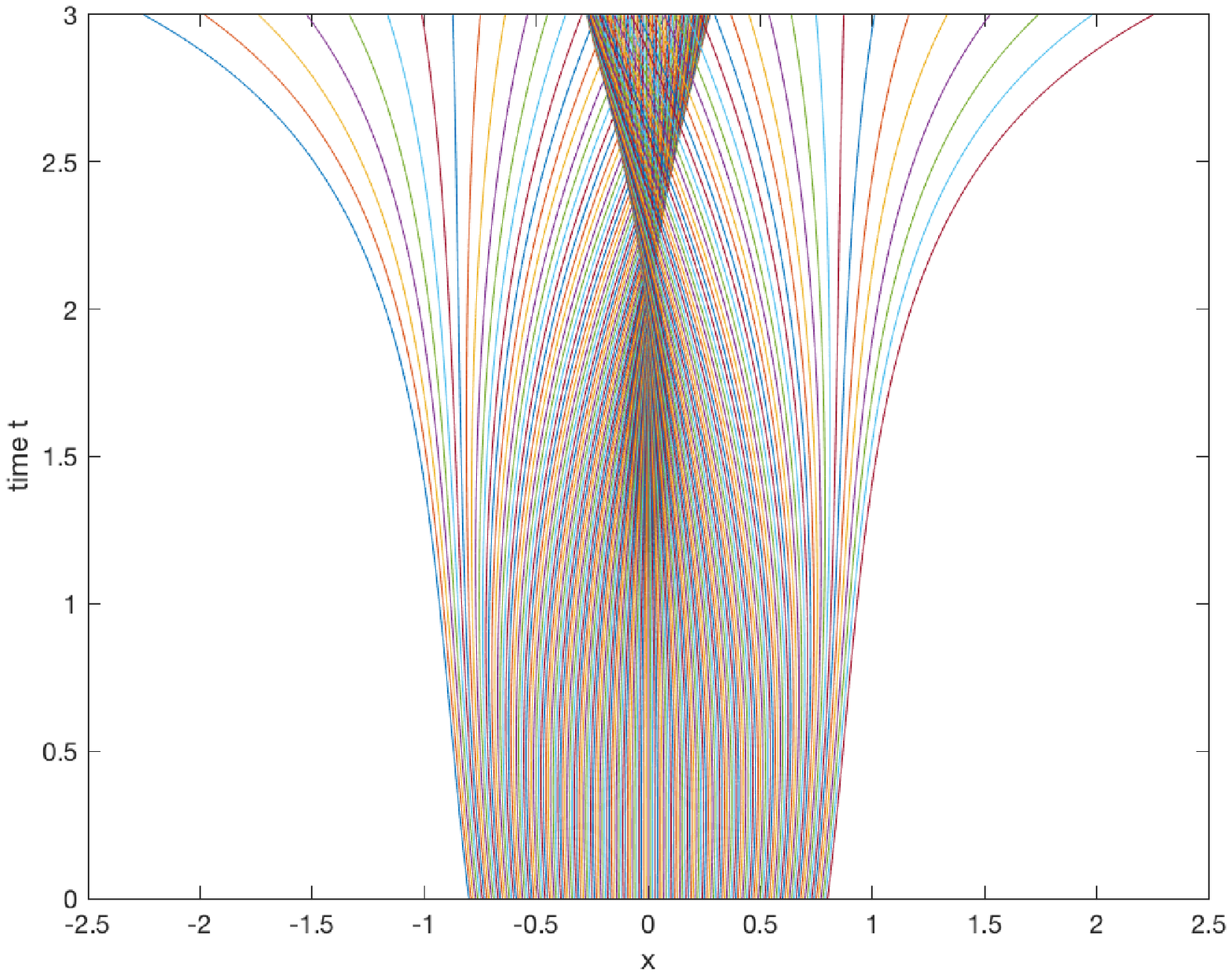}\includegraphics[width=2in]{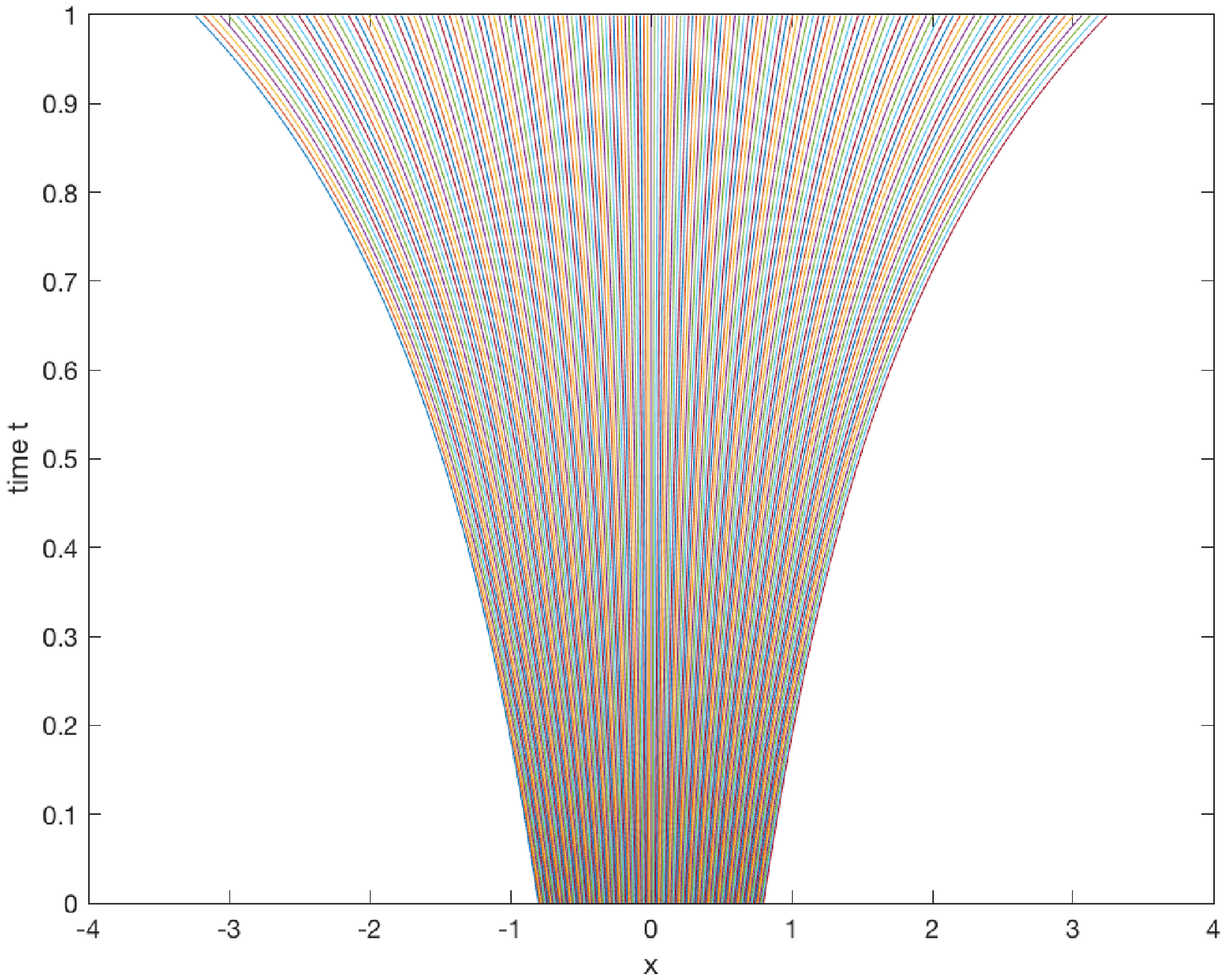}
\end{figure}

\begin{prop}
The function $Y(x,t):=y_{v(x,t)}(t)$ is the entropy solution of \eqref{entropy} with shock curve $\gamma(t)=0, t > T_1(0+)$, where $v(x,t) \in \mathbb{R}$ is defined in Lemma~\ref{10}. 
\end{prop}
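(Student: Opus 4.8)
The plan is to show that the proposed $Y(x,t)=y_{v(x,t)}(t)$ is a piecewise $\mathcal{C}^1$ weak solution of \eqref{entropy} whose only discontinuity is the line $x=0$ for $t>T_1(0+)$, and that across this line both the Rankine--Hugoniot jump relation and the Lax entropy condition hold; by the uniqueness of entropy solutions recorded right after \eqref{entropy}, this identifies $Y$ as \emph{the} entropy solution. First I would verify that $Y$ solves \eqref{entropy} classically off the line $x=0$. With flux $F(x,Y)=2\eta xY+\tfrac{xY|Y|}{2}-\tfrac{Y^2}{2}-\tfrac{x^2}{2}$, the quasilinear form of \eqref{entropy} is $\partial_t Y+F_Y(x,Y)\,\partial_x Y=-F_x(x,Y)$, and the characteristic ODE is exactly the system displayed before the proposition. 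Since $Y$ carries the value $y_v(t)$ along the curve $t\mapsto(x_v(t),t)$, the relation $\frac{d}{dt}y_v=-2\eta y_v-\tfrac{y_v|y_v|}{2}+x_v$ is precisely the PDE in characteristic form, so it remains only to establish regularity. For $x>0$, \lemref{10} furnishes a unique $v(x,t)\in\mathbb{R}_+$; Steps 1--4 of its proof show the stopped characteristics do not cross and that $v\mapsto x_v(t)$ is strictly increasing, and because $T_1(v)<2T(v)$ each relevant arc stays in $\{y>0\}$, where the vector field is smooth. The implicit function theorem then yields $(x,t)\mapsto v(x,t)\in\mathcal{C}^1$, hence $Y\in\mathcal{C}^1$, on $\{x>0\}$; the region $\{x<0\}$ follows from the odd symmetry $Y(-x,t)=-Y(x,t)$ inherited from $y_{-v}=-y_v$, $x_{-v}=-x_v$.

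Next I would identify the traces at $x=0$. As $x\downarrow 0$ the requirement $x_v(t)=x$, $t<T_1(v)$ forces $v(x,t)$ to decrease to the value $v^*(t)$ characterized by $T_1(v^*(t))=t$, using that $v\mapsto T_1(v)$ is increasing with limit $+\infty$ as $v\to v_0$ (\lemref{monotone}). For $t\le T_1(0+)$ this limiting value is $0$, so $Y$ extends continuously by $0$ across $x=0$ and no shock is present. For $t>T_1(0+)$ one has $v^*(t)>0$; since the analysis preceding \lemref{10} gives $x_{v^*}(T_1(v^*))=0$ with $|y_{v^*}(T_1(v^*))|=v^*$, and $y_{v^*}>0$ before its first zero, the right trace is $Y^+=v^*(t)>0$, and by symmetry the left trace is $Y^-=-v^*(t)<0$. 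Thus the only candidate shock is $\gamma(t)\equiv0$ on $\{t>T_1(0+)\}$, as claimed.

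It then remains to check admissibility. At $x=0$ the flux reduces to $F(0,Y)=-\tfrac{Y^2}{2}$, so with $\dot\gamma=0$ the Rankine--Hugoniot relation $\dot\gamma\,(Y^+-Y^-)=F(0,Y^+)-F(0,Y^-)$ holds because $(Y^+)^2=(Y^-)^2=(v^*)^2$. As $Y\mapsto F(0,Y)$ is strictly concave, the Lax inequalities are the correct admissibility criterion; the characteristic speed there is $F_Y(0,Y)=-Y$, so $F_Y(0,Y^-)=v^*>0=\dot\gamma>-v^*=F_Y(0,Y^+)$, meaning characteristics flow into the stationary shock from both sides. Hence the jump is entropic, $Y$ is a piecewise $\mathcal{C}^1$ weak solution with a single admissible shock, and uniqueness identifies it with the entropy solution of \eqref{entropy}.

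The step I expect to be the main obstacle is the regularity/foliation argument in the first paragraph: converting the non-crossing and monotonicity from \lemref{10} into honest $\mathcal{C}^1$ dependence of $v(x,t)$, with care taken near $x=0$, near the separating curve $v=v_0$, and near $y=0$ where the $|Y|$ nonlinearity is only $\mathcal{C}^1$. The last difficulty is ultimately avoided because the relevant trajectory arcs for $0<t<T_1(v)$ stay in $\{y>0\}$, but verifying this cleanly --- and matching the one-sided limits at $x=0$ continuously --- is where the real work lies.
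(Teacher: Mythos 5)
Your proposal is correct and follows essentially the same route as the paper: build $Y$ from the non-crossing characteristics of Lemma~\ref{10}, identify the one-sided traces $Y_{\pm}(t)=\pm v^{*}(t)$ with $T_1(v^{*}(t))=t$ (and zero traces for $t\le T_1(0+)$), and verify the Rankine--Hugoniot and entropy conditions across the stationary shock $\gamma\equiv 0$. The only cosmetic differences are that the paper checks admissibility via the Oleinik-type chord inequalities rather than your characteristic-speed form of the Lax condition (equivalent here since the flux at $x=0$ is $-Y^2/2$, concave in $Y$), and that it simply asserts the off-shock $\mathcal{C}^1$ regularity that you justify via the implicit function theorem.
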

\begin{proof}
It is clear that the function $Y(x,t)$ is $\mathcal{C}^1$ outside the shock curve, and we only need to check the $Rankine$-$Hugoniot \ condition$ and the $Lax \ condition$ (see \cite[Proposition 3]{MR3981375}). Define $$Y_+(t):=\lim\limits_{x \downarrow 0} Y(x,t), \ \ Y_-:=\lim\limits_{x \uparrow 0} Y(x,t). $$

If $t > T_1(0+)$, there exists a $v>0$ such that $t=T_1(v)$ since $v \mapsto T_1(v)$ is increasing to $+\infty$ as $v$ increases to $v_0$. Also it can be seen that $\lim\limits_{x \downarrow 0} v(x,t)=v$. According to the discussion above Lemma~\ref{10}, we conclude that $Y_+(t)=y_v(t)=v=\lim\limits_{x \downarrow 0} v(x,t)$,  
and similarly $Y_-(t)=-\lim\limits_{x \downarrow 0} v(x,t)$. If $t \leq T_1(0+)$, the mapping $v \mapsto x_v(t)$ is continuous and strictly increasing, which is zero at  $v=0$. Therefore $\lim\limits_{x \downarrow 0} v(x,t)=0$, and $Y_+(t)=Y_-(t)=0$. In summary, we have 

\[
Y_+(t)=-Y_-(t)=
\begin{cases}
\lim\limits_{x \downarrow 0}v(x,t) &  \text{ if } t > T_1(0+), \\
0                                                  & \text{ if } t \leq T_1(0+). 
\end{cases}
\]

Taking $\mathfrak{g}(x,Y)=2\eta xY +\frac{xY|Y|}{2}-\frac{Y^2}{2}-\frac{x^2}{2}$, 
we have $$\frac{d}{dt} \gamma(t)=0=  \frac{-\frac{(Y_+(t))^2}{2}+\frac{(Y_-(t))^2}{2}}{Y_+(t)-Y_-(t) }=\frac{\mathfrak{g}(\gamma(t),Y_+(t))-\mathfrak{g}(\gamma(t), Y_-(t))}{Y_+(t)-Y_-(t) },$$
which verifies the $Rankine$-$Hugoniot \ condition$. 

For any $c$ strictly between $Y_-(t)$ and $Y_+(t)$, $t >T_1(0+)$, we have $$\frac{\mathfrak{g}(\gamma(t),c)-\mathfrak{g}(\gamma(t),Y_+(t)) }{c-Y_+(t)}=\frac{\frac{(Y_+(t))^2}{2}-\frac{c^2}{2}}{c-Y_+(t)}=-\frac{c+Y_+(t)}{2},$$
$$\frac{\mathfrak{g}(\gamma(t),c)-\mathfrak{g}(\gamma(t),Y_-(t)) }{c-Y_-(t)}= =\frac{\frac{(Y_-(t))^2}{2}-\frac{c^2}{2}}{c-Y_-(t)}=-\frac{c+Y_-(t)}{2},$$
and therefore
$$\frac{\mathfrak{g}(\gamma(t),c)-\mathfrak{g}(\gamma(t),Y_+(t)) }{c-Y_+(t)}< \frac{d}{dt} \gamma(t)=0 <\frac{\mathfrak{g}(\gamma(t),c)-\mathfrak{g}(\gamma(t),Y_-(t)) }{c-Y_-(t)} ,$$
which verifies the $Lax \ condition$.
\end{proof}

\begin{remark}
It is easily seen that the entropy solution of \eqref{entropy} corresponds to a solution of \eqref{ME}.
\end{remark}

\begin{remark}\label{rmkMFG}
By Lemma~\ref{10}, we know that for any $\bar{\theta} \in [0,1]$, there exists a unique $v^{'}$ such that $x_{v^{'}}(T)=2 \bar{\theta}-1, T<T_1(v^{'})$. Then $(x_{v^{'}}(T-t), y_{v^{'}}(T-t))$ solves \eqref{eq1}, which is the mean field equilibrium induced the entropy solution. 
\end{remark}

\section{$N+1$-player game and the selection of Equilibrium}\label{sec:conv}
In this section, we consider the $N+1$-player game and always assume $\eta=0$. Since the model we are considering is invariant under permutation, it can be easily seen that $$V^{N+1}(t, 0, 1-\theta)=V^{N+1}(t, 1, \theta),$$ and therefore we only need to consider the HJB systems for $V^{N+1}(t,1,\theta)$: 
\begin{equation}\label{HJB}
\begin{cases}
 -\frac{d}{dt} V^{N+1}(t,1,\theta) = f(1,\theta)- \frac{(\alpha^{N+1}_*(t,1,\theta))^2 }{2}\\
 \ \ \ \ \ \ \ \ \ \ \ \ \ \ \ \ \ \ \  +N(1-\theta)\alpha^{N+1}_*(t,1,\theta)(V^{N+1}(t,1, \theta +\frac{1}{N})-V^{N+1}(t,1,\theta))\\
 \ \ \ \ \ \ \ \ \ \ \ \ \ \ \ \ \ \ \  +N\theta \alpha^{N+1}_*(t,0,\theta-\frac{1}{N})(V^{N+1}(t,1,\theta-\frac{1}{N})-V^{N+1}(t,1,\theta)) \\
 V^{N+1}(T,1,\theta)= 0, \\
\end{cases}
\end{equation}
where the optimal control policy is 
$$a^{N+1}_*(t, i, \theta)=(V^{N+1}(t,i,\theta)-V^{N+1}(t, 1-i, \theta))_+.$$

As a result of the local Lipschitz continuity of the HJB equation \eqref{HJB}, the system can be uniquely solved with terminal condition $V^{N+1}(T, 0, \theta)=0$, which provides us the unique Nash Equilibrium of the game.  Supposing that the representative player is applying the zero control while the other players are taking the optimal policy, then by the definition of Nash Equilibrium we conclude that $$V^{N+1}(t,1, \theta) \leq \mathbb{E} \bigg[ \int_t^T f(i(t), \theta_t) dt \bigg] \leq T-t. $$
Now we prove that  if the representative player agrees with the majority, then he will keep his state by taking the zero control. 

\begin{prop}\label{main}
Taking $$Y^{N+1}(t, \theta)=V^{N+1}(t,1, \theta)-V^{N+1}(t, 0, \theta)=V^{N+1}(t,1, \theta)-V^{N+1}(t, 1, 1- \theta),$$
for any $\theta \in \{0, \frac{1}{N}, \dotso, 1        \}$ we have 
\begin{equation}\label{majority}
\begin{aligned}
Y^{N+1}(t, \theta) \geq 0 \ (\alpha^{N+1}_*(t,0,\theta)=0)  \ \ \ \ &  \text {if } \theta \geq \frac{1}{2} ,  \\
Y^{N+1}(t, \theta) \leq 0 \ (\alpha^{N+1}_*(t, 1, \theta)=0) \ \ \ \ & \text{if } \theta \leq \frac{1}{2}. \\
\end{aligned}
\end{equation}
\end{prop}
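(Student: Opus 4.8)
The plan is to exploit the permutation symmetry $V^{N+1}(t,0,\theta)=V^{N+1}(t,1,1-\theta)$ recorded at the start of the section, which gives $Y^{N+1}(t,\theta)=V^{N+1}(t,1,\theta)-V^{N+1}(t,1,1-\theta)$ and hence the antisymmetry $Y^{N+1}(t,\theta)=-Y^{N+1}(t,1-\theta)$. Thus it suffices to prove $Y^{N+1}(t,\theta)\ge 0$ for $\theta\ge \tfrac12$, the reflected inequality following automatically; the bracketed control statements are then immediate from $\alpha^{N+1}_*(t,1,\theta)=(Y^{N+1}(t,\theta))_+$ and $\alpha^{N+1}_*(t,0,\theta)=(-Y^{N+1}(t,\theta))_+$. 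In fact I would aim for the cleaner equivalent assertion that $\theta\mapsto V^{N+1}(t,1,\theta)$ is nondecreasing along the grid, since then $Y^{N+1}(t,\theta)=V^{N+1}(t,1,\theta)-V^{N+1}(t,1,1-\theta)\ge 0$ exactly when $\theta\ge 1-\theta$.

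Next I set up a backward-in-time maximum principle. Writing $V_k:=V^{N+1}(t,1,k/N)$ and $Y_k:=V_k-V_{N-k}$, the terminal condition gives $Y_k(T)=0$, and at $t=T$ all controls vanish, so $-\frac{d}{dt}Y_k=f(1,k/N)-f(0,k/N)=2k/N-1$; hence $Y_k$ carries the correct sign just below $T$. I would then set $\tau:=\sup\{t\le T:\ \exists\, k\ge N/2,\ Y_k(t)<0\}$. Since there are finitely many grid points and each $Y_k$ is $\mathcal C^1$, we get $\tau<T$ and $Y_k(\tau)\ge 0$ for all $k\ge N/2$, so by antisymmetry the full sign pattern holds at $\tau$; moreover along a subsequence $t_n\uparrow\tau$ a single minimizing index $k^*\ge N/2$ satisfies $Y_{k^*}(t_n)<0=Y_{k^*}(\tau)$, yielding the one-sided derivative bound $\frac{d}{dt}Y_{k^*}(\tau)\ge 0$. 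The goal is to contradict this by showing $-\frac{d}{dt}Y_{k^*}(\tau)>0$.

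Now the critical computation. Subtracting \eqref{HJB} at the indices $k^*$ and $N-k^*$ and using $Y_{k^*}(\tau)=0$, both controls $\alpha^{N+1}_*(t,1,k^*/N)$ and $\alpha^{N+1}_*(t,0,k^*/N)$ vanish, so the quadratic control terms disappear and the running-cost difference $2k^*/N-1>0$ survives as the leading term. What remains are the neighbour-coupling terms of the form $N\theta\,\alpha^{N+1}_*(t,0,\cdot)\,(V_{\cdot\pm1}-V_\cdot)$ coming from the two equations, whose sign is governed precisely by the first differences of $V^{N+1}(t,1,\cdot)$.

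The main obstacle is exactly controlling these coupling terms: their sign is the monotonicity of $V^{N+1}(t,1,\cdot)$, so the sign result and monotonicity cannot be separated and must be established in one stroke. I would therefore run a single invariant-region argument for the pair of constraints $\{D_k:=V_k-V_{k-1}\ge 0\ \text{(monotonicity)},\ Y_k\ge 0\ \text{for}\ k\ge N/2\ \text{(sign)}\}$, both of which hold just below $T$. At the first backward time that either constraint binds I evaluate the corresponding backward derivative $-\frac{d}{dt}D_{k^*}(\tau)$ or $-\frac{d}{dt}Y_{k^*}(\tau)$ using simultaneously $D\ge 0$ and the $Y$ sign pattern: once monotonicity is available every product $\alpha^{N+1}_*\,(V_{\cdot\pm1}-V_\cdot)$ acquires a definite sign, the coupling contributions become favourable, and the positive running-cost term (bounded below by $1/N$) dominates. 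The genuinely delicate point is the quadratic-in-control differences $p_{k^*}^2-p_{k^*-1}^2$ with $p_k=(Y_k)_+$ arising in the monotonicity computation; these I would bound using that $x\mapsto(x)_+$ is $1$-Lipschitz together with $D\ge0$, and using that at a binding $Y$-constraint the controls at $k^*$ vanish so that the offending difference is absorbed. Proving that this combined region is invariant under the backward flow is the heart of the argument; the remaining steps are the bookkeeping of the subtracted \eqref{HJB} equations.
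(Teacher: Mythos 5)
Your overall strategy---reduce by symmetry to $Y^{N+1}(t,\theta)\ge 0$ for $\theta\ge\frac12$, couple this sign condition with monotonicity of $\theta\mapsto V^{N+1}(t,1,\theta)$, and propagate the joint constraint backward from $t=T$ by a first-crossing argument---is essentially the strategy of the paper, which introduces the first differences $W^{N+1}(t,\vartheta)=V^{N+1}(t,1,\vartheta)-V^{N+1}(t,1,\vartheta-\frac1N)$ and runs exactly such an invariant-region argument (using a Gronwall-type linear comparison on the interval $[s,T-\epsilon]$ rather than a strict derivative sign at the crossing time, which is a more robust variant of the same idea).

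There is, however, a genuine gap in your choice of invariant region. You impose $D_k=V_k-V_{k-1}\ge 0$ at \emph{all} grid points, and the upper half of the grid is precisely where the quadratic control differences bite. For $\vartheta>\frac12$ the state-$1$ control $\alpha^{N+1}_*(t,1,\vartheta)=(Y^{N+1}(t,\vartheta))_+$ is active, and the equation for $\frac{d}{dt}W^{N+1}(t,\vartheta)$ contains the term $\frac12\big[(Y^{N+1}(t,\vartheta)_+)^2-(Y^{N+1}(t,\vartheta-\frac1N)_+)^2\big]$. Since $Y_k-Y_{k-1}=D_k+D_{N-k+1}$, at a backward first-crossing time for $D_{k^*}$ with $k^*/N>\frac12$ this term equals $\frac12(Y_{k^*}+Y_{k^*-1})D_{N-k^*+1}\ge 0$: it is \emph{not} proportional to the binding quantity $D_{k^*}$, it is not dominated by the favourable running-cost contribution of size $\frac1N$, and your proposed remedies (Lipschitz continuity of $(\cdot)_+$ together with $D\ge0$, or ``the controls vanish at the binding constraint'') do not apply, because at a binding $D$-constraint the controls need not vanish. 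Hence the derivative at the crossing time can have the wrong sign and the region does not close as sketched. The paper sidesteps this by putting into the invariant region only the differences $W^{N+1}(t,1-\theta)$ for $\theta>\frac12$, i.e.\ monotonicity on the \emph{lower} half of the grid, where $Y^{N+1}\le 0$ forces $(Y^{N+1})_+=0$ and the quadratic differences vanish identically; the upper-half differences that do enter the equation for $Y^{N+1}(t,\theta)$ appear only multiplied by the binding quantity $Y^{N+1}(t,\theta)_+$ and are absorbed using the crude a priori bound $|W^{N+1}|\le 2T$ together with Gronwall. If you replace ``monotonicity everywhere'' by ``monotonicity on $\{\theta\le\frac12\}$'' and treat the remaining upper-half differences by that a priori bound, your argument goes through; as written it does not. (A minor additional point: you should exclude $\theta=\frac12$, where $Y^{N+1}\equiv 0$ by symmetry and the running-cost difference vanishes, as the paper does by reducing to $\theta\ge\frac12+\frac1N$.)
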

\begin{proof}
We only prove the first inequality of \eqref{majority} for even $N$, and the rest can be proved similarly. As a result of $Y^{N+1}(t, \frac{1}{2})=0$, it is enough for us to show it for $\theta \geq \frac{1}{2}+\frac{1}{N}$. Take 
$$W^{N+1}(t, \theta)=V^{N+1}(t,1,\theta)-V^{N+1}(t,1, \theta-\frac{1}{N}).$$
According to \eqref{HJB}, we obtain 
\begin{equation}\label{value1}
\begin{aligned}
\frac{d}{dt} Y^{N+1}& (t, \theta)=   1-2 \theta + \frac{|Y^{N+1}(t, \theta)| Y^{N+1}(t, \theta) }{2}   \\
  & + N \theta \bigg( Y^{N+1}(t, \theta -\frac{1}{N})_-  W^{N+1}(t, \theta ) + Y^{N+1}(t, \theta)_-W^{N+1}(t, 1- \theta+\frac{1}{N}) \bigg) \\
&  -N(1-\theta) \bigg(Y^{N+1}(t, \theta)_+W^{N+1}(t ,\theta+\frac{1}{N}) +Y^{N+1}(t,\theta+\frac{1}{N})_+W^{N+1}(t, 1- \theta)         \bigg), \\
 \end{aligned}
\end{equation}
and 
\begin{equation}\label{value2}
\begin{aligned}
\frac{d}{dt} W^{N+1}(t, 1-\theta)= & -\frac{1}{N}+\frac{Y^{N+1}(t, 1-\theta)_+^2 }{2}-\frac{Y^{N+1}(t, 1-\theta-\frac{1}{N})_+^2 }{2}      \\
&-N\theta Y^{N+1}(t, 1-\theta)_+W^{N+1}(t ,1-\theta+\frac{1}{N})  \\
& +N(1-\theta) Y^{N+1}(t, 1-\theta-\frac{1}{N})_-W^{N+1}(t,1- \theta )    \\
&+N(\theta+\frac{1}{N})Y^{N+1}(t, 1-\theta -\frac{1}{N})_+W^{N+1}(t, 1-\theta) \\
&-N(1-\theta-\frac{1}{N})Y^{N+1}(t, 1-\theta -\frac{2}{N})_-W^{N+1}(t, 1- \theta -\frac{1}{N}). \\
\end{aligned}
\end{equation}

By our terminal condition $V^{N+1}(T,1, \theta) =0$, it is easy to see that $Y^{N+1}(T,\theta)=W^{N+1}(T,\theta)=0$, and both $\frac{d}{dt} Y^{N+1}(T, \theta ), \frac{d}{dt} W^{N+1}(T, 1- \theta)$ are negative if $\theta> \frac{1}{2}.$ And therefore by the continuity of $V^{N+1}(t,1, \theta)$, there exists a small positive $\epsilon >0$ such that $Y^{N+1}(t,\theta), W^{N+1}(t, 1-\theta)$ are positive during the time interval $[T- \epsilon,T)$. Define $$s:=\sup\limits_{\{t<T-\epsilon\}} \{ t: W^{N+1}(t,1-\theta)=0   \text { or } Y^{N+1}(t, \theta)=0 \text{ for some } \theta >\frac{1}{2}         \} .$$
We finish the argument by showing that $Y^{N+1}(t, \theta)$ and $W^{N+1}(t, 1-\theta)$ are both positive for $t \in [s, T-\epsilon], \theta > \frac{1}{2}$, which implies $s$ has to be $-\infty$. 
By the definition of $s$, we have $Y^{N+1}(t, \theta)=-Y^{N+1}(t,1-\theta) \geq 0, W^{N+1}(t, 1- \theta ) \geq 0 $ if $t \in [s, T-\epsilon)$, $\theta > \frac{1}{2}$, and therefore we obtain the following inequality from \eqref{value1},
$$\frac{d}{dt} Y^{N+1}(t, \theta ) \leq Y^{N+1}(t, \theta) \bigg(\frac{Y^{N+1}(t, \theta)}{2}- N(1- \theta) W^{N+1}(t, \theta +\frac{1}{N})\bigg).$$ Since $V^{N+1}(t,1, \theta ) \leq T$, we get that $|Y^{N+1}(t, \theta)| \leq 2T$, $|W^{N+1}(t, \theta)| \leq 2T$ for any $\theta \in \{0, \frac{1}{N}, \dotso,1 \}$.  Therefore $Y^{N+1}(t, \theta)$ is bounded below by the solution of 
\begin{equation*}
\begin{cases}
\frac{d}{dt} l(t)= ( T+2NT )l(t)\\
l(T-\epsilon)=Y^{N+1}(T-\epsilon, \theta), \\
\end{cases}
\end{equation*}
which is always positive. Similarly, for $t \in [s, T-\epsilon], \theta > \frac{1}{2}$, we obtain the inequality from \eqref{value2} 
$$ \frac{d}{dt} W^{N+1}(t, 1- \theta) \leq N(1-\theta)Y^{N+1}(t, 1- \theta -\frac{1}{N})_-W^{N+1}(t, 1- \theta) \leq 2NT(1-\theta) W^{N+1}(t, 1- \theta) ,$$
which implies $W^{N+1}(t, 1- \theta) >0$ for $t \in [s, T-\epsilon]$. 
\end{proof}

\begin{remark}
Recall that $\mathbf{Z}(t)$ is the state of the $N+1$ players at time $t$  when agents play the Nash equilibrium given by \eqref{HJB1}. Denote by ${\theta}^{N+1}(t)$ the fraction of players at state $0$, i.e., $${\theta}^{N+1}(t)=\frac{1}{N+1} \sum\limits_{j=1}^{N+1} \delta_{Z_j(t)=0}. $$
and let $U$ be the solution of \eqref{ME} corresponding to the entropy solution of \eqref{entropy}.  According to Proposition~\ref{main}, ${\theta}^{N+1}(t)$ will always stay on one side of $\frac{1}{2}$ if ${\theta}^{N+1}(0) \not =\frac{1}{2}$.  In combination with the fact that $U(t,i, \theta)$ is smooth outside the curve $\bar{\gamma}(t)=\frac{1}{2}$, it can be easily seen that $V^{N+1}(t,1,\theta)$ converges to $U(t,1,\theta)$ if $\theta \not =\frac{1}{2}$ (see  e.g. \cite[Theorem 8]{MR3981375}        ).

Let $(\xi_j)_{j \in \mathbb{N}}$ be  the i.i.d initial datum of $Z_j$ such that  $\mathbb{P}[\xi_j=0]=\bar{\theta}\not = \frac{1}{2}, \mathbb{P}[\xi_j=1]=1-\bar{\theta}.$ 
Denote by $\tilde{Z}_j $ the i.i.d process in which players choose the optimal control $\tilde{\alpha}(t,i):=(U(t,i,\theta(t))-U(t,1-i,\theta(t)))_+$, where $U$ is the corresponding entropy solution of \eqref{ME}. Also, we can prove the propagation of chaos property by using the technique developed in \cite{MR4013871} and \cite{MR3981375}. 
\end{remark}

\section{Conclusion}When $\eta>1/2$, the N-player game converges to the mean field game following the analysis of \cite{MR3860894} and \cite{MR4013871}. Here we considered the case when $\eta=0$ and showed that the N-player game value functions converge to the entropic mean-field game solution and verified in this case the conjecture of \cite{MR4046528}.

When $\eta \in (0,\frac{1}{2})$, it is always possible for players to jump to the other state. Therefore $\theta^{N+1}(t)$ may not always stay on one side of $\frac{1}{2}$, and when we use It\^{o}'s formula to the entropy solution $U$, there would be extra jump terms. Subsequently our strategy does not work when $\eta \in (0,1/2)$, and  new techniques are needed. We leave this as an open problem.

When $\bar{\theta}=1/2$, it is expected that the N player limit will charge the two solutions we obtain with equal probability (as in \cite{MR4046528}), which is numerically justified by the Figure 3 of \cite{2019arXiv190305788H}. Hence in that case the $N$-player empirical distribution will not converge to the stable fixed points of the MFG map (in the language of \cite{MR4046528}) unlike what is claimed in the conjecture.

\footnotesize{
\bibliographystyle{siam}
\bibliography{ref.bib}}

\begin{thebibliography}{10}

\bibitem{MR3860894}
{\sc E.~Bayraktar and A.~Cohen}, {\em Analysis of a finite state many player
  game using its master equation}, SIAM J. Control Optim., 56 (2018),
  pp.~3538--3568.

\bibitem{MR3967062}
{\sc P.~Cardaliaguet, F.~Delarue, J.-M. Lasry, and P.-L. Lions}, {\em The
  master equation and the convergence problem in mean field games}, vol.~201 of
  Annals of Mathematics Studies, Princeton University Press, Princeton, NJ,
  2019.

\bibitem{MR3752669}
{\sc R.~Carmona and F.~Delarue}, {\em Probabilistic theory of mean field games
  with applications. {I}}, vol.~83 of Probability Theory and Stochastic
  Modelling, Springer, Cham, 2018.
\newblock Mean field FBSDEs, control, and games.

\bibitem{MR3753660}
\leavevmode\vrule height 2pt depth -1.6pt width 23pt, {\em Probabilistic theory
  of mean field games with applications. {II}}, vol.~84 of Probability Theory
  and Stochastic Modelling, Springer, Cham, 2018.
\newblock Mean field games with common noise and master equations.

\bibitem{MR4013871}
{\sc A.~Cecchin and G.~Pelino}, {\em Convergence, fluctuations and large
  deviations for finite state mean field games via the master equation},
  Stochastic Process. Appl., 129 (2019), pp.~4510--4555.

\bibitem{MR3981375}
{\sc A.~Cecchin, P.~D. Pra, M.~Fischer, and G.~Pelino}, {\em On the
  {C}onvergence {P}roblem in {M}ean {F}ield {G}ames: {A} {T}wo {S}tate {M}odel
  without {U}niqueness}, SIAM J. Control Optim., 57 (2019), pp.~2443--2466.

\bibitem{MR4046528}
{\sc F.~Delarue and R.~Foguen~Tchuendom}, {\em Selection of equilibria in a
  linear quadratic mean-field game}, Stochastic Process. Appl., 130 (2020),
  pp.~1000--1040.

\bibitem{MR3268060}
{\sc D.~Gomes, R.~M. Velho, and M.-T. Wolfram}, {\em Socio-economic
  applications of finite state mean field games}, Philos. Trans. R. Soc. Lond.
  Ser. A Math. Phys. Eng. Sci., 372 (2014), pp.~20130405, 18.

\bibitem{MR3072242}
{\sc D.~A. Gomes, J.~Mohr, and R.~R.~a. Souza}, {\em Continuous time finite
  state mean field games}, Appl. Math. Optim., 68 (2013), pp.~99--143.

\bibitem{2019arXiv190305788H}
{\sc B.~{Hajek} and M.~{Livesay}}, {\em {On non-unique solutions in mean field
  games}}, arXiv e-prints,  (2019), p.~arXiv:1903.05788.

\bibitem{4303232}
{\sc M.~{Huang}, P.~E. {Caines}, and R.~P. {Malhame}}, {\em Large-population
  cost-coupled lqg problems with nonuniform agents: Individual-mass behavior
  and decentralized $\varepsilon$-nash equilibria}, IEEE Transactions on
  Automatic Control, 52 (2007), pp.~1560--1571.

\bibitem{MR2346927}
{\sc M.~Huang, R.~P. Malham\'{e}, and P.~E. Caines}, {\em Large population
  stochastic dynamic games: closed-loop {M}c{K}ean-{V}lasov systems and the
  {N}ash certainty equivalence principle}, Commun. Inf. Syst., 6 (2006),
  pp.~221--251.

\bibitem{MR2269875}
{\sc J.-M. Lasry and P.-L. Lions}, {\em Jeux \`a champ moyen. {I}. {L}e cas
  stationnaire}, C. R. Math. Acad. Sci. Paris, 343 (2006), pp.~619--625.

\bibitem{LASRY2006679}
{\sc J.-M. Lasry and P.-L. Lions}, {\em Jeux {\`a} champ moyen. ii -- horizon
  fini et contr{\^o}le optimal}, Comptes Rendus Mathematique, 343 (2006),
  pp.~679 -- 684.

\bibitem{MR2295621}
{\sc J.-M. Lasry and P.-L. Lions}, {\em Mean field games}, Jpn. J. Math., 2
  (2007), pp.~229--260.

\end{thebibliography}
\end{document}